\newtheoremstyle{examplestyleone}  
  {3mm}       
  {10mm}       
  {\normalfont}   
  {}        
  {\bfseries}  
  {\quad}   
  {1mm}       
  {}           
\theoremstyle{examplestyleone}
\newtheoremstyle{examplestyletwo}  
  {5mm}       
  {3mm}       
  {\itshape}   
  {}        
  {\bfseries}  
  {}   
  {3mm}       
  {}           
\theoremstyle{examplestyletwo}
\newtheorem{theorem}{Theorem}[section]
\newtheorem{lemma}[theorem]{Lemma}
\newtheorem{remark}[theorem]{Remark}
\DeclareMathOperator{\var}{var}
\numberwithin{equation}{section}
\title{Multilevel Monte Carlo analysis for optimal control of elliptic PDEs with random coefficients} 
\author{Ahmad Ahmad Ali\footnotemark[1], \ Elisabeth Ullmann\footnotemark[2] \ and Michael Hinze\footnotemark[1]}
\begin{document}
\maketitle
\renewcommand{\thefootnote}{\fnsymbol{footnote}}
\footnotetext[2]{Fachbereich Mathematik, Universit\"{a}t Hamburg, Bundesstr. 55, 20146 Hamburg, Germany (\{ahmad.ali,michael.hinze\}@uni-hamburg.de).} 
\footnotetext[1]{Zentrum Mathematik M2, TU M\"{u}nchen, Boltzmannstr. 3, 85748 Garching, Germany 
(elisabeth.ullmann@ma.tum.de).}
\renewcommand{\thefootnote}{\arabic{footnote}}

\begin{abstract}
This work is motivated by the need to study the impact of data uncertainties and material imperfections on the solution to optimal control problems constrained by partial differential equations.  
We consider a pathwise optimal control problem constrained by a diffusion equation with random coefficient together with box constraints for the control.
For each realization of the diffusion coefficient we solve an optimal control problem using the variational discretization  [M. Hinze, Comput. Optim. Appl., 30 (2005), pp. 45-61].
Our framework allows for lognormal coefficients whose realizations are not uniformly bounded away from zero and infinity.
We establish finite element error bounds for the pathwise optimal controls.
This analysis is nontrivial due to the limited spatial regularity and the lack of uniform ellipticity and boundedness of the diffusion operator.
We apply the error bounds to prove convergence of a multilevel Monte Carlo estimator for the expected value of the pathwise optimal controls.
In addition we analyze the computational complexity of the multilevel estimator.
We perform numerical experiments in 2D space to confirm the convergence result and the complexity bound.
\end{abstract}

\noindent{\bf Keywords:} PDE-constrained optimization, uncertainty quantification, lognormal random fields, control constraints, variational discretization \medskip

\noindent{\bf Mathematics Subject Classification:} 35R60, 49J20, 60H35, 65C05, 65N30

\section{Introduction}\label{sec:intro}

Optimization problems constrained by partial differential equations (PDEs) arise from many applications in engineering and science, e.g. the optimal control of fluid flows, or shape optimization.
Since the boundary conditions or material parameters are often not precisely known we consider optimal control problems (OCPs) constrained by PDEs with random or parametrized coefficients.
By this we mean that the coefficients, source terms and/or boundary values associated  with the PDE are modeled as random fields or parameter-dependent functions where we postulate a certain probability distribution for the parameters.
The motivation for this setup is uncertainty quantification (UQ) in complex PDE-based simulations where it is crucial to account for imprecise or missing information in the input data.

Elliptic PDEs with random coefficients are well studied to date and there is a large body of literature on efficient approximation techniques and solvers, see e.g. \cite{BNT:2010,GunzburgerActa}, \cite[Chapter 9]{LPSBook} and the references therein.
However, PDE-constrained optimization problems with random coefficients have been studied only very recently.
We consider a distributed control problem with tracking-type cost functional constrained by an elliptic PDE with a random coefficient together with box constraints for the control. 
A similar problem setup with elliptic and parabolic PDE constraint has been considered in \cite{Borzi:2010} and \cite{BorziWinckel:2009}, respectively.

For each fixed realization of the random coefficient in the PDE constraint we solve an OCP. 
We then estimate the statistics, e.g. the expected value or variance of the ensemble of pathwise optimal controls.
This allows us to study the sensitivity of the controls with respect to random fluctuations in the PDE constraint and provides practical information on the design of control devices subject to uncertain inputs.
It is clear, however, that the expected value of the pathwise optimal controls does not solve an OCP in general and is not necessarily a robust control.
We give further motivation and background information on various OCPs with random coefficients in \S\ref{sec:background}.

In this paper we describe and analyze a multilevel Monte Carlo (MLMC) estimator for the expected value of the pathwise optimal controls.
MLMC is an efficient and versatile alternative to standard Monte Carlo (MC) estimation.
It has been successfully used to estimate the statistics of output functionals generated by elliptic PDEs with random coefficients; see e.g. the pioneering works \cite{BSZ:2011,CGST:2011}.
In further developments, MLMC has been used for e.g. mixed finite element (FE) approximations of random diffusion problems \cite{GSU:2015}, random obstacle problems \cite{BierigChernov:2014,KSW:2014}, Markov chains \cite{DKST:2015}, or for the approximation of the distribution function of random variables \cite{GNR:2015}.  

The basic idea of MLMC is \textit{variance reduction}.
The MLMC estimator employs a hierarchy of approximations to the desired output quantity associated with certain levels.
From one level to the next the approximation quality of the output increases but so does the cost to compute it.
By a clever combination of additive corrections with decreasing variance the total cost of the MLMC estimator can be reduced dramatically compared to standard MC.
By construction, the variance of the corrections is large on coarse levels and small on fine levels.
Hence the MLMC estimator requires a large number of inexpensive output samples with a coarse resolution and only a modest number of highly resolved, expensive output samples. 
In our setting the levels are defined by FE discretizations of the pathwise optimal control problems on nested triangulations of the computational domain.

The major contribution of this work is a careful error analysis of the MLMC approximation to the expected value of the pathwise optimal controls.
This requires bounds on the FE error and the sampling error.
We extend the classical FE analysis of PDE-based OCPs with deterministic inputs (see e.g. \cite{pinnau2008optimization}) to a setting with random PDE coefficients. 
Along the way we make novel contributions.
For example, we prove that the pathwise optimal controls are realizations of a well-defined random field.
This requires nontrivial arguments from set-valued analysis.
We mention that other statistical information, e.g. the variance of the optimal controls, could also be estimated using the multilevel Monte Carlo framework (cf. \cite{BierigChernov:2014}).

To analyze the total approximation error we begin by carrying out a FE error analysis for a fixed realization of the random PDE coefficient.
Then we extend the error estimates to the entire sample space using H\"{o}lder's inequality.
This procedure is well established by now and follows e.g. \cite{charrier2013finite,teckentrup2013further}.
However, it is nontrivial for two reasons.
First, it requires the careful tracking of all constants in the standard FE error estimates.
These ``generic'' constants are usually not precisely specified in deterministic analyses.
In our setting they cannot be ignored since they depend on the realization of the random coefficient in the PDE constraint.
Second, we assume that the realizations of the random coefficient are not uniformly bounded over the sample space and are only H\"{o}lder continuous with exponent $0<t\leq 1$.
Hence the underlying elliptic PDE in the pathwise OCP is not uniformly elliptic with respect to the random inputs.
This scenario is typical for lognormal coefficients $a$ where $\log(a)$ is a mean-zero Gaussian random field with a certain non-smooth covariance function.

The FE error analysis of PDE-based OCPs relies heavily on the FE error associated with the state approximation.
Of course, the limited smoothness of the PDE coefficient results in a limited regularity of the state; standard textbook $H^2$-regularity is only achieved for $t=1$.
This reduces the convergence rate of the $L^2$-error of the pathwise optimal control.
This situation is not ideal since the variance reduction and hence the efficiency of the MLMC estimator is determined by the FE error convergence rate of the output quantity of interest; the faster the FE error decays the larger is the variance reduction.
For this reason we employ the variational discretization \cite{hinze2005variational} for the control together with piecewise linear continuous finite elements for the state.
Then it can be proved that the $L^2$-error of the (pathwise) optimal control converges with rate $2s$ for any $0<s<t$.
This is in fact the optimal rate in our this setting.
Alternatively, for the classical discretizations with piecewise constant or piecewise linear continuous controls the $L^2$-error of the pathwise optimal control converges only with the rates $s$ and $3s/2$, respectively, in the best case (cf. \cite{Roesch:2006} and \cite[\S~3.2.6]{pinnau2008optimization}).

The remainder of this paper is organized as follows. 
In \S\ref{sec:background} we review OCPs with random coefficients and comment on the state of the art.
In \S\ref{sec:problem} we formulate the pathwise OCP together with the state equation and first order optimality conditions.
The regularity of the state and the optimal control is also investigated.
In \S\ref{sec:FE} we focus on the discretization of the pathwise OCP. 
We discuss the variational discretization and prove an priori estimate for the $L^2$-error of the pathwise optimal controls. 
In \S\ref{sec:MC} we describe MC and multilevel MC estimators for the expected value of the pathwise optimal controls and analyze the computational costs of these estimators.
Finally, in \S\ref{sec:example} we discuss a 2D test problem.
We confirm numerically the $L^2$-error bound for the expectation of the controls and the complexity bound for the MLMC estimator.

\section{Background}\label{sec:background}

Various formulations for PDE-constrained OCPs with random coefficients have appeared in the literature to date. 
These can be distinguished by assumptions on the optimal control (deterministic or stochastic) and the form of the cost functional whose minimum is sought.
In the following we give an informal overview to put our work in context.
In addition we comment on solvers for these problems.

Consider a cost functional $J=J(u,y(u),a)$ where $u$ denotes the control, $y$ denotes the state and $a$ is some parameter associated with the PDE constraint. 
Note that we could write $J$ in reduced form as a function of the control only. 
In our setting, $a$ is a random function with realizations denoted by $a_\omega$.
We distinguish the following problem formulations:
\begin{itemize}
\item[(a)] 
Mean-based control, see \cite{Borzi:2010,Borzietal:2010,BorziWinckel:2009}:
Replace $a$ by its expected value $\mathbb{E}[a]$. Minimize $J(u,y(u),\mathbb{E}[a])$ by a deterministic optimal control. 
\item[(b)] 
Individual or ``pathwise'' control, see \cite{Borzi:2010,BorziWinckel:2009,Negrietal:2015,Negrietal:2013}: 
Fix $a_\omega$, minimize $J(u,y(u),a_\omega)$ and obtain a realization $u^\ast_\omega$ of the stochastic optimal control $u^\ast$.
In a postprocessing step, compute the statistics of $u^\ast$, e.g. $\mathbb{E}[u^\ast]$.
\item[(c)]
Averaged control, see \cite{LazarZuaZua:2014,ZuaZua:2014}: 
Control the averaged (expected) state by minimizing $J(u,\mathbb{E}[y(u)],a)$ using a deterministic optimal control.
\item[(d)]
Robust deterministic control, see \cite{Borzietal:2010,BorziWinckel:2011,GarreisUlbrich:2016,GunzburgerMing:2011,Gunzburgeretal:2011,HouLeeManouzi:2011,Kouri:2014,Kourietal:2013,Kourietal:2014,LeeLee:2013,RosseelWells:2012}: 
Minimize the expected cost $\mathbb{E}[J(u,y(u),a)]$ by a deterministic optimal control. 
\item[(e)]
Robust stochastic control, see \cite{BennerOnwutaStoll:2015a,BennerOnwutaStoll:2015b, ChenQuarteroniRozza:2015,ChenQuarteroni:2014,ChenQuarteroniRozza:2013,KunothSchwab:2013,Tiesleretal:2012}: Minimize the expected cost $\mathbb{E}[J(u,y(u),a)]$ by a stochastic optimal control. 
\end{itemize}
The mean-based control problem (a) does not account for the uncertainties in the PDE and it is not clear if the resulting deterministic optimal control is robust with respect to the random fluctuations.
The pathwise control problem (b) is highly modular and can be combined easily with sampling methods, e.g. Monte Carlo or sparse grid quadrature. 
However, the expected value $\mathbb{E}[u^\ast]$ does not solve an OCP and is in general not a robust control. 
The average control problem (c) introduced by Zuazua \cite{ZuaZua:2014} seeks to minimize the distance of the expected state to a certain desired state.
This is an interesting alternative to the robust control problem in (d) where the expected distance of the (random) state to a desired state is minimized.
Since the cost functional in (c) uses a weaker error measure than the cost functional in (d) the average optimal control does not solve the robust control problem in general.
\textit{Stochastic} optimal controls in (e) are of limited practical use since controllers typically require a \textit{deterministic} signal.
This can, of course, be perturbed by a \textit{known} mean-zero stochastic fluctuation which models the uncertainty in the controller response.
For these reasons, deterministic, robust controls in (d) are perhaps most useful in practice and have attracted considerable attention compared to the other formulations.
However, the robust OCP in (d) or the average OCP in (c) involve an infinite number of PDE constraints which are coupled by a single cost functional.
The approximate solution of such problems is extremely challenging and requires much more computational resources than e.g. a deterministic OCP with a single deterministic PDE constraint.
For this reason it is worthwhile to explore alternative problem formulations.

In this paper we consider a pathwise control problem of the form (b).
(The precise problem formulation is given in \S\ref{sec:ocp}.)
We are interested in the statistical properties of the pathwise optimal controls, e.g. the expected value $\mathbb{E}[u^\ast]$ or variance $\var[u^\ast]$.
Observe that $\mathbb{E}[u^\ast]$ can be used as initial guess for the robust control problem in (d) if the variance $\var[u^\ast]=\mathbb{E}[u^\ast-\mathbb{E}[u^\ast]]^2$ is small.
This is justified by the Taylor expansion
\[
\mathbb{E}[\widehat{J}(u^\ast)]
=
\widehat{J}(\mathbb{E}[u^\ast])+ \frac12 \frac{d^2 \widehat{J}}{d u^2}(\mathbb{E}[u^\ast]) \var[u^\ast] \ + \ \text{higher order moments},
\]
where we have used the reduced cost functional $\widehat{J}=\widehat{J}(u)$ and the assumption that $\widehat{J}$ is smooth.
However, we re-iterate that $\mathbb{E}[u^\ast]$ is in general not the solution of an optimization problem.

The control problems (a)--(e) have been tackled by a variety of solver methodologies.
We mention stochastic Galerkin approaches in \cite{HouLeeManouzi:2011,KunothSchwab:2013,LeeLee:2013,RosseelWells:2012}, stochastic collocation in \cite{Borzi:2010,BorziWinckel:2009,ChenQuarteroniRozza:2013,Kouri:2014,Kourietal:2013,Kourietal:2014,RosseelWells:2012,Tiesleretal:2012}, low-rank, tensor-based methods in \cite{BennerOnwutaStoll:2015a,BennerOnwutaStoll:2015b,GarreisUlbrich:2016}, and reduced basis/POD methods in \cite{BorziWinckel:2011,ChenQuarteroniRozza:2015,ChenQuarteroni:2014,GunzburgerMing:2011,Negrietal:2015,Negrietal:2013}.
In this paper, we employ multilevel Monte Carlo to estimate the expected value of the pathwise controls, see \S\ref{sec:MLMC}.

\section{The problem setting}\label{sec:problem}

\subsection{Notation}
In the sequel, $(\Omega,\mathcal{A},\mathbb{P})$ denotes a probability space, where $\Omega$ is a sample space, $\mathcal{A} \subset 2^\Omega$ is a $\sigma$-algebra and $\mathbb{P}:\mathcal{A} \to [0, 1]$ is a probability measure. Given a Banach space $(X,\|\cdot\|_X)$, the  space $L^p(\Omega,X)$ is the set of strongly measurable functions $v :\Omega \to X$ such that $\|v\|_{L^p(\Omega,X)} < \infty$, where 
\[\|v\|_{L^p(\Omega,X)}:= \left\{
  \begin{array}{lr}
   \bigg(\int_\Omega \|v(\omega)\|^p_X d\mathbb{P}(\omega) \bigg)^{1/p} & \quad \mbox{ for } 1\leq  p < \infty,\\
    \mbox{esssup}_{\omega \in \Omega}\|v(\omega)\|_X  & \quad \mbox{ for } p=\infty.
  \end{array}
\right.
\]
For $L^p(\Omega,\mathbb{R})$ we write $L^p(\Omega)$. For a bounded  Lipschitz domain  $D \subset \mathbb{R}^d$, the spaces $C^0(\bar D)$ and $C^1(\bar D)$ are the usual spaces of uniformly continuous functions and continuously differentiable functions, respectively, with their standard norms. The space $C^t(\bar D)$ with $0<t<1$ denotes the space of H\"{o}lder continuous functions with the norm 
\begin{align*}
\|v\|_{C^t(\bar D)}:= \sup_{x \in \bar D} |v(x)| + \sup_{x,y \in \bar D,x \neq y} \frac{|v(x)-v(y)|}{|x-y|^t}.
\end{align*} 
For $k \in \mathbb{N}$, the space $H^k(D)$ is the classical Sobolev space, on which we define the norm and semi-norm, respectively, as 
\begin{align*}
\|v\|_{H^k(D)}:=\Bigg(\sum_{|\alpha|\leq k} \int_D |D^\alpha v|^2 \, dx \Bigg)^{1/2} \quad \mbox{and} \quad |v|_{H^k(D)}:=\Bigg(\sum_{|\alpha|= k} \int_D |D^\alpha v|^2 \, dx \Bigg)^{1/2}.
\end{align*}
Recall that, for bounded domains $D \subset \mathbb{R}^d$, the norm $\|\cdot\|_{H^k(D)}$ and semi-norm $|\cdot|_{H^k(D)}$ are equivalent on the subspace $H_0^k(D)$ of $H^k(D)$. For $r:=k+s$ with  $0<s<1$ and $k \in \mathbb{N}$, we denote by $H^r(D)$ the space of all functions $ v \in H^k(D)$ such that $\|v\|_{H^r(D)} < \infty$, where the norm $\|\cdot\|_{H^r(D)}$ is defined by 
\begin{align*}
\|v\|_{H^r(D)}:= \Bigg( \|v\|^2_{H^k(D)} + \sum_{|\alpha|=k} \int_D \int_D \frac{| D^\alpha v(x)-D^\alpha v(y)|^2}{|x-y|^{d+2s}} \, dx \, dy \Bigg)^{1/2}.
\end{align*}
Finally, for any two positive quantities $a$ and $b$, we write $a \lesssim b$ to indicate that $\tfrac{a}{b}$ is uniformly bounded by a positive constant independent of the realization $\omega\in \Omega$ or the discretization parameter $h$.

\subsection{The state equation}
Let $D \subset \mathbb{R}^d, d=1,2,3$ denote a  bounded convex domain with Lipschitz boundary $\partial D$. 
For simplicity we assume that $D$ is polyhedral.  
We consider the following linear elliptic PDE with random coefficients:
\begin{equation}
\label{Eqn:1}
 \begin{aligned}
-\nabla \cdot ( a(\omega,x) \nabla y(\omega,x) )  &=u(x)  \quad \mbox{ in }  D, \\
y(\omega, x) &=0  \quad \mbox{ on } \partial D,
\end{aligned}
\end{equation}
for $\mathbb{P}$-a.s. $\omega \in \Omega$,
 where $(\Omega,\mathcal{A},\mathbb{P})$ is a probability space.
The $\sigma$-algebra $\mathcal{A}$ associated with $\Omega$ is generated by the random variables $\{a(\cdot,x)\colon x\in D\}$. 
The differential operators $\nabla \cdot$ and $\nabla$ are with respect to $x \in D$. Let us formally define for all $\omega \in \Omega$,
\begin{equation*}
a_{\min}(\omega):=\min_{x \in \bar D} a(\omega, x)  \qquad \mbox{ and } \qquad a_{\max}(\omega):=\max_{x \in \bar D} a(\omega, x).  
\end{equation*}
We make the following assumptions on the data.
\begin{description}
\item[A1.]
$a_{\min} \geq 0$ almost surely and $a^{-1}_{\min} \in L^p(\Omega)$, for all $p \in [1, \infty)$,
\item[A2.]
$a \in L^p(\Omega,C^t(\overline D))$ for some $0< t \leq 1$ and for all $p \in [1, \infty)$,
\item[A3.]
$u \in L^2(D)$.
\end{description}
Notice that Assumption~A2 implies that the quantities $a_{\min}$ and $a_{\max}$ are well defined and that $a_{\max} \in  L^p(\Omega)$ for all $p \in [1, \infty)$ since $a_{\max}(\omega)=\|a\|_{C^0(\bar D)}$. Moreover, together with Assumption~A1, it follows that $a_{\min}(\omega)>0$ and $a_{\max}(\omega)< \infty$ for almost all $\omega \in \Omega$.

From now on, for simplicity of notation, the subscript $\omega$ denotes the dependence on $\omega$. 
The variational formulation of \eqref{Eqn:1}, parametrized by $\omega \in \Omega$, is 
\begin{equation}
\label{Eqn:2}
\int_D a_\omega \nabla  y_\omega   \cdot  \nabla v  \, dx  =  \int_D  u v \,dx \quad \forall \, v \in H^1_0(D).
\end{equation}
We say that for any $\omega \in \Omega$, $y_\omega$ is a weak solution of \eqref{Eqn:1} if and only if $y_\omega \in H^1_0(D)$ and satisfies \eqref{Eqn:2}.
For completeness we restate here a special case of \cite[Theorem 2.1]{teckentrup2013further} on the regularity of the solution to \eqref{Eqn:2}.

\begin{theorem}
\label{Thm:1}
Let Assumptions A1-A3 hold for some $0<t \leq 1$. Then, for $\mathbb{P}$-a.s. $\omega \in \Omega$, there exists a unique weak solution $y_\omega \in H^1_0(D)$ to \eqref{Eqn:1}. 
It holds
\begin{equation}
\label{Eqn:12}
|y_{\omega}|_{H^1(D)} \lesssim \frac{\|u\|_{L^2(D)}}{a_{\min}(\omega)},
\end{equation}
and
\begin{equation}
\label{Eqn:11}
\|y_\omega\|_{H^{1+s}(D)} \lesssim C_{\ref{Thm:1}}(\omega) \|u\|_{L^2(D)},
\end{equation}
for all $0<s<t$ except $s=\tfrac{1}{2}$, where
\begin{align*}
C_{\ref{Thm:1}}(\omega):=\frac{a_{\max}(\omega) \|a_\omega\|^2_{C^t(\bar D)} }{a_{\min}(\omega)^4}.
\end{align*}
Moreover, $y \in L^p(\Omega,H^{1+s}(D))$, for all $p \in [1,\infty)$. If $t=1$, then $y \in L^p(\Omega,H^{2}(D))$ and the bound \eqref{Eqn:11} holds with $s=1$.
\end{theorem}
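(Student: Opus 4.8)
The plan is to fix $\omega$ in the full-measure event on which $0 < a_{\min}(\omega) \le a_{\max}(\omega) < \infty$ (guaranteed by A1 and A2) and $a_\omega \in C^t(\bar D)$, to argue first for this fixed realization while tracking the dependence of \emph{every} constant on $a_{\min}(\omega)$, $a_{\max}(\omega)$ and $\|a_\omega\|_{C^t(\bar D)}$, and only at the end to integrate over $\Omega$. For the first assertion I would invoke Lax--Milgram: on $H^1_0(D)$ the bilinear form in \eqref{Eqn:2} is coercive with constant $a_{\min}(\omega)$ and bounded with constant $a_{\max}(\omega)$ (using the Poincar\'{e} inequality to pass between the $H^1$-seminorm and the full norm), so a unique weak solution $y_\omega \in H^1_0(D)$ exists. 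Testing \eqref{Eqn:2} with $v=y_\omega$ and estimating $\int_D u\,y_\omega\,dx \le \|u\|_{L^2(D)}\|y_\omega\|_{L^2(D)}$ by Cauchy--Schwarz and Poincar\'{e} yields $a_{\min}(\omega)\,|y_\omega|_{H^1(D)}^2 \lesssim \|u\|_{L^2(D)}\,|y_\omega|_{H^1(D)}$, which is exactly \eqref{Eqn:12}.

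The core of the theorem, and the step I expect to be the main obstacle, is the fractional shift estimate \eqref{Eqn:11}. Since $a_\omega$ is only H\"{o}lder continuous I cannot use the product-rule form $-a_\omega\Delta y_\omega - \nabla a_\omega\cdot\nabla y_\omega = u$ directly, so I would freeze the coefficient and treat the equation as a perturbed Poisson problem: writing $-\Delta y_\omega = a_\omega^{-1}\bigl(u + \nabla\cdot((a_\omega-\bar a)\nabla y_\omega)\bigr)$ for a suitable reference value $\bar a$, the regularity gain comes from the fact that $(a_\omega-\bar a)\nabla y_\omega$ lives in a fractional Sobolev space. The two analytic ingredients I would rely on are (i) the $H^2$-regularity of the Dirichlet Laplacian on a convex domain (Grisvard), which, after interpolation, provides the shift from a right-hand side in $H^{s-1}(D)$ to a solution in $H^{1+s}(D)$ with a constant depending only on the (fixed) geometry and hence independent of $\omega$, and (ii) a multiplication estimate $\|a_\omega w\|_{H^s(D)} \lesssim \|a_\omega\|_{C^t(\bar D)}\|w\|_{H^s(D)}$ valid for $0<s<t$, applied to $w=\nabla y_\omega$. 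Combining these through a localization/bootstrap argument and repeatedly substituting the energy bound \eqref{Eqn:12} is what produces the stated constant $C_{\ref{Thm:1}}(\omega) = a_{\max}(\omega)\|a_\omega\|_{C^t(\bar D)}^2 a_{\min}(\omega)^{-4}$: one factor $a_{\min}(\omega)^{-1}$ is inherited from \eqref{Eqn:12}, while the remaining powers of $a_{\min}(\omega)^{-1}$, the factor $a_{\max}(\omega)$, and the two powers of $\|a_\omega\|_{C^t(\bar D)}$ accumulate from the normalization $b=a_\omega/a_{\min}(\omega)$ and from bounding the perturbation terms. Carefully bookkeeping these constants, rather than absorbing them into a generic constant as in the deterministic theory, is the delicate part, and is precisely where I would lean on the cited result \cite[Theorem 2.1]{teckentrup2013further}. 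For $t=1$ the multiplication estimate holds up to $s=1$ and convexity of $\bar D$ gives genuine $H^2(D)$-regularity, which yields the endpoint statement.

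Finally, to pass from the pathwise bounds to $y \in L^p(\Omega,H^{1+s}(D))$ I would raise \eqref{Eqn:11} to the $p$-th power and integrate over $\Omega$. Since $\|u\|_{L^2(D)}$ is deterministic it suffices to show $C_{\ref{Thm:1}} \in L^p(\Omega)$ for every finite $p$. Writing $C_{\ref{Thm:1}}$ as the product $a_{\max}\cdot\|a_\omega\|_{C^t(\bar D)}^2\cdot a_{\min}^{-4}$ and applying the generalized H\"{o}lder inequality, it is enough that each factor lie in $L^q(\Omega)$ for every finite $q$; this holds for $a_{\max}$ and $\|a_\omega\|_{C^t(\bar D)}$ by A2 (note $a_{\max}(\omega)=\|a_\omega\|_{C^0(\bar D)}\le\|a_\omega\|_{C^t(\bar D)}$) and for $a_{\min}^{-4}$ by A1. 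The strong measurability of $\omega\mapsto y_\omega$ needed for the Bochner-space statement I would deduce from the continuous (indeed Lipschitz) dependence of the solution of \eqref{Eqn:2} on the coefficient together with the measurability of $a$, so that $y$ is a well-defined element of $L^p(\Omega,H^{1+s}(D))$, which completes the proof.
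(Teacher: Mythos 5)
Your proposal is correct and follows essentially the same route as the paper: Lax--Milgram with the coercivity constant $a_{\min}(\omega)$ and continuity constant $a_{\max}(\omega)$ for existence, uniqueness and the energy bound \eqref{Eqn:12}, deferral to \cite[Theorem 2.1]{teckentrup2013further} for the fractional shift estimate \eqref{Eqn:11} with its explicit constant, and H\"{o}lder's inequality together with Assumptions A1--A2 to conclude $y \in L^p(\Omega,H^{1+s}(D))$. Your additional sketch of the frozen-coefficient argument behind \eqref{Eqn:11} and your remark on the strong measurability of $\omega \mapsto y_\omega$ go beyond what the paper writes out, but they do not change the substance of the argument.
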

\begin{proof}
Let's define for $\mathbb{P}$-a.s. $\omega \in \Omega$ the bilinear form $b_\omega : H^1_0(D) \times H^1_0(D) \to \mathbb{R}$ and the linear functional $l : H^1_0(D) \to \mathbb{R}$ by
\begin{align}
\label{Eqn:13} 
b_\omega (y,v) := \int_D a_\omega \nabla  y   \cdot  \nabla v  \, dx, \quad \mbox{ and } \quad l (v) := \int_D u v  \, dx.
\end{align}
It is clear that Assumption A3 implies $l \in H^{-1}(D)$ (the dual space of $H^1_0(D)$). Moreover, from Assumptions A1-A2 we have
\begin{align}
\label{Eqn:14}
|b_\omega (y,v)| & \leq a_{\max} (\omega) |y|_{H^1(D)} |v|_{H^1(D)} \quad \forall \, y , v \in H_0^1(D), \\
|b_\omega (y,y)| & \geq a_{\min} (\omega) |y|^2_{H^1(D)} \quad \forall \, y  \in H_0^1(D). \nonumber
\end{align}
Hence, according to the Lax-Milgram lemma, there exists a unique $y_\omega \in H_0^1(D)$ such that
\begin{align*}
b_\omega (y_\omega,v)=l (v) \quad \forall \, v \in H_0^1(D), \quad \mbox{ and } \quad |y_{\omega}|_{H^1(D)} \lesssim \frac{\|u\|_{L^2(D)}}{a_{\min}(\omega)}.
\end{align*}

For the $H^{1+s}(D)$ regularity of $y_\omega$ and the estimate \eqref{Eqn:11}, we refer the reader to \cite{teckentrup2013further}. From \eqref{Eqn:11}, Assumptions A1-A2 and the H\"older inequality, it follows that $y \in L^p(\Omega,H^{1+s}(D))$. This completes the proof.
\end{proof}
In the light of Theorem~\ref{Thm:1}, we introduce the following weak solution operator of \eqref{Eqn:1} for $\mathbb{P}$-a.s. $\omega \in \Omega$
\begin{equation}
\label{Eqn:4}
S_{\omega}:L^2(D) \to H^1_0(D)
\end{equation}
such that $y_\omega :=S_{\omega}u$  is the weak solution of \eqref{Eqn:1} for a given right hand side $u \in L^2(D)$ and a realization $a_\omega$. Obviously, the operator $S_\omega$ is bounded and linear.

\subsection{The optimal control problem}\label{sec:ocp}
For $\mathbb{P}$-a.s. $\omega \in \Omega$, we consider the following optimal control problem parametrized by $\omega \in \Omega$
\[
(\mathrm{P}_\omega) \quad
\begin{array}{l}
 \min_{u \in U_{ad}} J_\omega(u):=\frac{1}{2} \Vert S_{\omega} u-z \Vert_{L^2(D)}^2  + \dfrac{\alpha}{2} \Vert u \Vert_{L^2(D)} ^2 \\
\end{array}
\]
where $S_{\omega}:L^2(D) \to H^1_0(D)$ is the weak solution operator of the elliptic PDE \eqref{Eqn:1} as introduced in \eqref{Eqn:4}, $U_{ad} \subset L^2(D)$ is the closed convex set of admissible controls defined by
\begin{equation}
\label{Uad}
U_{ad}:=\{ u \in L^2(D) : u_a \leq u(x) \leq u_b \mbox{ a.e. in } D\}
\end{equation}
with $- \infty \leq u_a \leq u_b \leq \infty$ are given. Finally,  the desired state $z \in L^2(D)$ is a given deterministic function and $\alpha >0$ is a given real number.
To begin we establish the existence and uniqueness of the solution to $(\mathrm{P}_\omega)$.

\begin{theorem}
\label{Thm:10}
Suppose that $U_{ad}$ is non-empty. Then, for  $\mathbb{P}$-a.s. $\omega \in \Omega$, there exists a unique global solution $u_\omega \in U_{ad}$ for the Problem~$(\mathrm{P}_\omega)$. 
\end{theorem}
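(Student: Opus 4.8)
The plan is to treat $(\mathrm{P}_\omega)$ as the minimization of a strictly convex, coercive, continuous functional over a nonempty closed convex set in the Hilbert space $L^2(D)$, and to invoke the direct method of the calculus of variations. First I would fix $\omega$ in the set of full $\mathbb{P}$-measure on which Theorem~\ref{Thm:1} guarantees that the solution operator $S_\omega : L^2(D) \to H^1_0(D)$ is a well-defined bounded linear map; it suffices to establish existence and uniqueness for each such $\omega$, which then yields the $\mathbb{P}$-a.s. statement. For this fixed $\omega$ the reduced functional $J_\omega$ is finite on all of $L^2(D)$, and I would record its three relevant structural properties. Continuity follows since $u \mapsto S_\omega u - z$ is affine and bounded and the squared $L^2$-norm is continuous. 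Strict convexity is inherited from the regularization term: the tracking term $\tfrac12\|S_\omega u - z\|_{L^2(D)}^2$ is convex because $S_\omega$ is linear, while $\tfrac{\alpha}{2}\|u\|_{L^2(D)}^2$ is strictly convex as $\alpha>0$, so the sum is strictly convex. Coercivity follows from the lower bound $J_\omega(u) \geq \tfrac{\alpha}{2}\|u\|_{L^2(D)}^2$, whence $J_\omega(u) \to +\infty$ as $\|u\|_{L^2(D)} \to \infty$.

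For existence I would run the standard argument. Let $(u_n) \subset U_{ad}$ be a minimizing sequence. By coercivity $(u_n)$ is bounded in $L^2(D)$, so by reflexivity it has a subsequence (not relabeled) converging weakly to some $\bar u \in L^2(D)$. Since $U_{ad}$ is convex and closed it is weakly closed, hence $\bar u \in U_{ad}$. Because $J_\omega$ is convex and strongly continuous it is weakly lower semicontinuous, so
\[
J_\omega(\bar u) \leq \liminf_{n\to\infty} J_\omega(u_n) = \inf_{u \in U_{ad}} J_\omega(u),
\]
which shows that $\bar u$ attains the infimum. This uses the nonemptiness of $U_{ad}$ assumed in the statement to ensure the infimum is finite.

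Uniqueness would then follow immediately from the strict convexity of $J_\omega$: if $u_1 \neq u_2$ were two minimizers, strict convexity applied to $\tfrac12(u_1+u_2) \in U_{ad}$ would give $J_\omega\big(\tfrac12(u_1+u_2)\big) < \tfrac12 J_\omega(u_1) + \tfrac12 J_\omega(u_2)$, contradicting minimality. The argument is essentially routine convex optimization in Hilbert space; the only point requiring any care is the passage to the weak limit, where one must combine the weak closedness of $U_{ad}$ with the weak lower semicontinuity of $J_\omega$ (a consequence of convexity plus strong continuity rather than of weak continuity, which the tracking term does not possess). I do not expect a substantive obstacle here, since the coercivity supplied by the term $\tfrac{\alpha}{2}\|u\|_{L^2(D)}^2$ and the boundedness and linearity of $S_\omega$ from Theorem~\ref{Thm:1} furnish all the ingredients directly.
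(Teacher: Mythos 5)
Your proposal is correct and follows essentially the same route as the paper, which also treats $(\mathrm{P}_\omega)$ for fixed $\omega$ as a deterministic convex problem, invokes a minimizing-sequence (direct method) argument using the closedness and convexity of $U_{ad}$ together with the strict convexity of $J_\omega$ (from linearity of $S_\omega$ and $\alpha>0$), and deduces uniqueness from strict convexity, deferring details to \cite[Chapter~1]{pinnau2008optimization}. Your write-up simply fills in those standard details (coercivity, weak compactness, weak lower semicontinuity) explicitly.
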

\begin{proof}
For a fixed $\omega \in \Omega$, the Problem~$(\mathrm{P}_\omega)$ is a deterministic infinite dimensional optimization problem. Since the set $U_{ad}$ is closed and convex and the cost functional $J_\omega$ is strictly convex, due to the linearity of $S_\omega$ and $\alpha>0$,
 it is enough to consider a minimizing sequence and argue in a classical way to verify the existence  of a global solution for $(\mathrm{P}_\omega)$. The uniqueness follows from the strict convexity of $J_\omega$. For the details we refer the reader to [\cite{pinnau2008optimization}, Chapter~1].
\end{proof}
 
The subscript in $u_\omega$ is only to indicate that $u_\omega$ is the solution of $(\mathrm{P}_\omega)$ for a given $\omega \in \Omega$. The next result gives the first order optimality conditions of $(\mathrm{P}_\omega)$. For the proof we refer the reader to [\cite{pinnau2008optimization}, Chapter~1].
\begin{theorem}
\label{Thm:11}
A feasible control $u_\omega \in U_{ad}$ is a solution of $(\mathrm{P}_\omega)$ if and only if there exist a state $y_\omega \in H_0^1(D)$ and an adjoint state $p_\omega \in H_0^1(D)$ such that there holds
\begin{align*}
&\int_D a_\omega \nabla  y_\omega   \cdot  \nabla v  \, dx  =  \int_D  u_\omega v \,dx \quad \forall \, v \in H^1_0(D),   \\
&\int_D a_\omega \nabla  p_\omega   \cdot  \nabla v  \, dx  =  \int_D (y_\omega -z) v \, dx   \quad \forall \, v \in H^1_0(D), \\
&\int_D (p_\omega + \alpha  u_\omega)(u-u_\omega) \, dx \geq  0 \quad \forall \, u \in U_{ad}. 
\end{align*}
\end{theorem}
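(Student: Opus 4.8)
The plan is to use the standard characterization of minimizers of a convex, Fréchet-differentiable functional over a convex set by a variational inequality, and then to rewrite that inequality in terms of an adjoint state. Since $S_\omega$ is bounded and linear and $\alpha>0$, the reduced functional $J_\omega$ is strictly convex and Fréchet differentiable on $L^2(D)$, so that $u_\omega\in U_{ad}$ solves $(\mathrm{P}_\omega)$ if and only if the first-order variational inequality $J_\omega'(u_\omega)(u-u_\omega)\geq 0$ holds for all $u\in U_{ad}$. Necessity follows by differentiating $t\mapsto J_\omega(u_\omega+t(u-u_\omega))$ at $t=0^+$ along the admissible direction $u-u_\omega$ (which is feasible for small $t\geq 0$ by convexity of $U_{ad}$); sufficiency follows from the convexity estimate $J_\omega(u)\geq J_\omega(u_\omega)+J_\omega'(u_\omega)(u-u_\omega)$. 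The entire statement, including the ``if and only if'' and the existence of $y_\omega$ and $p_\omega$, will then reduce to showing that this single variational inequality is equivalent to the three listed relations.

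First I would compute the Fréchet derivative explicitly. A direct calculation gives, for any direction $h\in L^2(D)$,
\[
J_\omega'(u_\omega)\,h = \langle S_\omega u_\omega - z,\, S_\omega h\rangle_{L^2(D)} + \alpha\,\langle u_\omega,\, h\rangle_{L^2(D)}.
\]
Setting $y_\omega := S_\omega u_\omega$ recovers the first (state) equation directly from the definition \eqref{Eqn:4} of the solution operator, and $y_\omega\in H^1_0(D)$ by Theorem~\ref{Thm:1}.

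The crucial step is to remove the operator $S_\omega$ from the argument $h$ in the term $\langle y_\omega - z,\, S_\omega h\rangle$. To this end I would introduce the adjoint state $p_\omega\in H^1_0(D)$ as the weak solution of $\int_D a_\omega \nabla p_\omega\cdot\nabla v\,dx = \int_D (y_\omega - z)\,v\,dx$ for all $v\in H^1_0(D)$; since $y_\omega - z\in L^2(D)$, existence, uniqueness and $H^1_0(D)$-regularity follow from Theorem~\ref{Thm:1} applied with this right-hand side, so $p_\omega = S_\omega(y_\omega - z)$. Testing the adjoint equation with $v=S_\omega h$ and the state equation defining $S_\omega h$ with $v=p_\omega$, and using the symmetry of the bilinear form $b_\omega$ in \eqref{Eqn:13}, I obtain
\[
\langle y_\omega - z,\, S_\omega h\rangle_{L^2(D)} = b_\omega(p_\omega, S_\omega h) = b_\omega(S_\omega h, p_\omega) = \langle h,\, p_\omega\rangle_{L^2(D)}.
\]
This is precisely the self-adjointness of $S_\omega$ on $L^2(D)$, and it lets me rewrite the derivative as $J_\omega'(u_\omega)h = \langle p_\omega + \alpha u_\omega,\, h\rangle_{L^2(D)}$.

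Substituting $h=u-u_\omega$ turns the variational inequality into $\int_D (p_\omega + \alpha u_\omega)(u-u_\omega)\,dx \geq 0$ for all $u\in U_{ad}$, which is exactly the third stated relation; conversely, if $u_\omega,y_\omega,p_\omega$ satisfy the three relations, the first two identify $y_\omega=S_\omega u_\omega$ and $p_\omega=S_\omega(y_\omega-z)$, so the symmetry identity above recasts the third relation as $J_\omega'(u_\omega)(u-u_\omega)\geq 0$, whence $u_\omega$ minimizes $J_\omega$ by convexity. I do not expect a serious obstacle: the only point requiring care is the symmetry argument identifying the adjoint, together with the fact that the variational inequality is genuinely both necessary and sufficient, which is where strict convexity (hence $\alpha>0$ and linearity of $S_\omega$) is used. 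For the underlying abstract results on the variational-inequality characterization I would refer to [\cite{pinnau2008optimization}, Chapter~1], as the surrounding text already does.
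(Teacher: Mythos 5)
Your proof is correct and is essentially the argument the paper relies on: the paper does not prove Theorem~\ref{Thm:11} itself but defers to \cite[Chapter~1]{pinnau2008optimization}, where exactly this reasoning (variational-inequality characterization of minimizers of the convex reduced functional, plus identification of $J_\omega'(u_\omega)h=\langle p_\omega+\alpha u_\omega,h\rangle_{L^2(D)}$ via the adjoint state and the symmetry of $b_\omega$) is carried out. Your write-up fills in those standard steps faithfully, including the correct handling of the converse direction through uniqueness of the state and adjoint equations.
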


Now we define a map $u^\ast: \Omega \times D \to \mathbb{R}$ where $u^\ast(\omega,\cdot)$ is the solution of $(\mathrm{P}_\omega)$ for the given $\omega \in \Omega$. 
Precisely, let
\begin{equation}
\label{Eqn:6}
u^\ast: \Omega \times D \to \mathbb{R} \, \mbox{ such that } \, u^\ast(\omega,\cdot):= \min_{u \in U_{ad}} J_\omega(u) \quad \mbox{ for } \mathbb{P}\mbox{-a.s. } \omega \in \Omega.
\end{equation}
Next we prove that $u^\ast$ is a random field and establish some properties of it.
\begin{theorem}
\label{Thm:extra}
For each $\omega \in \Omega$ the map $\omega \mapsto u^\ast(\omega,\cdot)$  is measurable.
\end{theorem}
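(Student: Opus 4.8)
The plan is to exhibit $u^\ast$ as the composition of the (measurable) coefficient field $\omega\mapsto a_\omega$ with a \emph{continuous} data-to-solution map, and then invoke the fact that the composition of a strongly measurable map into a separable space with a continuous map is again measurable. Since $L^2(D)$ is separable, by the Pettis measurability theorem it suffices to produce such a factorisation; the target is $u^\ast_\omega\in L^2(D)$, the unique minimiser guaranteed by Theorem~\ref{Thm:10}. First I would record the measurability of the coefficient: Assumption~A2 gives $a\in L^p(\Omega,C^t(\bar D))$, so $\omega\mapsto a_\omega$ is strongly measurable into $C^t(\bar D)$ and, through the continuous embedding $C^t(\bar D)\hookrightarrow C^0(\bar D)$, also into $C^0(\bar D)$. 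By Assumption~A1 the realisation lies, for $\mathbb{P}$-a.s.\ $\omega$, in the open set $\mathcal{O}:=\{a\in C^0(\bar D):\min_{\bar D} a>0\}$; on the exceptional null set I simply set $u^\ast_\omega:=0$.

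The core step is to show that the solution map $\Phi:\mathcal{O}\to L^2(D)$, $\Phi(a):=\argmin_{u\in U_{ad}}\big(\tfrac12\|S_a u-z\|_{L^2(D)}^2+\tfrac{\alpha}{2}\|u\|_{L^2(D)}^2\big)$, is continuous, indeed locally Lipschitz. I would proceed in two sub-steps. (i) Near any fixed $a_0\in\mathcal{O}$ the coefficients are uniformly bounded above and below, so the standard perturbation estimate for the bilinear form \eqref{Eqn:13}, combined with \eqref{Eqn:12}, yields $\|S_{a_1}-S_{a_2}\|_{\mathcal{L}(L^2(D))}\lesssim\|a_1-a_2\|_{C^0(\bar D)}$ for $a_1,a_2$ in a neighbourhood of $a_0$, with an analogous bound for the adjoint $S_a^\ast$. (ii) Writing $\nabla\widehat{J}_a(u)=S_a^\ast(S_a u-z)+\alpha u$, the reduced functional is $\alpha$-strongly convex uniformly in $a$. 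Testing the variational inequality of Theorem~\ref{Thm:11} for $\Phi(a_1)$ against $\Phi(a_2)$ and conversely, then adding, strong monotonicity gives
\[
\alpha\,\|\Phi(a_1)-\Phi(a_2)\|_{L^2(D)}^2
\le
\big\langle \nabla\widehat{J}_{a_2}(\Phi(a_2))-\nabla\widehat{J}_{a_1}(\Phi(a_2)),\,\Phi(a_1)-\Phi(a_2)\big\rangle,
\]
whence $\|\Phi(a_1)-\Phi(a_2)\|_{L^2(D)}\lesssim\|S_{a_1}-S_{a_2}\|_{\mathcal{L}(L^2(D))}\lesssim\|a_1-a_2\|_{C^0(\bar D)}$ locally, and continuity of $\Phi$ on $\mathcal{O}$ follows.

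Finally, $u^\ast_\omega=\Phi(a_\omega)$ holds $\mathbb{P}$-a.s., so $u^\ast$ is the composition of the strongly measurable map $\omega\mapsto a_\omega$ taking values in $\mathcal{O}$ with the continuous map $\Phi$, and is therefore strongly measurable into $L^2(D)$, as claimed. The main obstacle I anticipate is precisely sub-steps (i)--(ii): converting the merely \emph{local} uniform ellipticity---the coefficients are not bounded away from $0$ or $\infty$ over all of $\Omega$---into a genuine continuity statement for $\Phi$, while keeping track that the perturbation constants depend only on the local bounds on $a$ and not on the global (absent) ones. An alternative that sidesteps the explicit stability estimate is the set-valued route: verify that $(\omega,u)\mapsto J_\omega(u)$ is a Carathéodory (normal) integrand---measurability in $\omega$ for fixed $u$ again following from continuity of $a\mapsto S_a u$---and apply a measurable-selection/measurable-$\argmin$ theorem to the argmin multifunction, which is single-valued by Theorem~\ref{Thm:10}.
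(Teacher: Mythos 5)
Your proof is correct, but your primary route is genuinely different from the paper's; in fact, the paper's proof is precisely the ``alternative'' you sketch in your closing sentences. The paper observes that $(\omega,u)\mapsto J_\omega(u)$ is a Carath\'eodory integrand (measurability in $\omega$ for fixed $u$ being \emph{recalled} for $\omega\mapsto S_\omega u$, via \cite{da2014stochastic}), then invokes the set-valued analysis results of Aubin--Frankowska \cite{aubin2009set} — measurability of the argmin multifunction $R(\omega)$ and existence of a measurable selection — and finally uses the uniqueness from Theorem~\ref{Thm:10} to identify that selection with $u^\ast$. You instead factor $u^\ast=\Phi\circ a$ through the open set $\mathcal{O}=\{a\in C^0(\bar D):\min_{\bar D}a>0\}$ and prove that $\Phi$ is locally Lipschitz, and both sub-steps hold up: the bound in (i) follows by testing the difference of the two weak formulations with $y_1-y_2$ and using \eqref{Eqn:12}, and your displayed inequality in (ii) is exactly what one obtains by adding the two variational inequalities to the $\alpha$-strong monotonicity of $\nabla\widehat{J}_{a_1}$; the remaining factors ($\|S_{a_i}\|_{\mathcal{L}(L^2(D))}$ and $\|\Phi(a_2)\|_{L^2(D)}$, controlled via \eqref{Eqn:12} and \eqref{Eqn:32}) are uniformly bounded on a neighbourhood of any fixed $a_0\in\mathcal{O}$, which is all a continuity statement needs — so the absence of \emph{global} uniform ellipticity, which you rightly flag as the main obstacle, is indeed harmless here because continuity is a local property. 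Comparing the two approaches: the paper's route is shorter and requires no stability analysis at all, but it leans on measurable-selection machinery and leaves the measurability of $\omega\mapsto S_\omega u$ as a recalled fact; your route is more elementary and self-contained — it delivers that measurability as a by-product of the continuity of $a\mapsto S_a u$ — and it furnishes a quantitative local Lipschitz stability of the pathwise optimal control with respect to the coefficient, which is of independent interest (e.g.\ for perturbation or quasi-Monte Carlo arguments), at the price of the extra perturbation estimates. Your exceptional-set handling is also sound: $\{\omega:a_\omega\notin\mathcal{O}\}$ is the preimage of a closed subset of $C^0(\bar D)$ under a measurable map, hence measurable, so redefining $u^\ast_\omega:=0$ there preserves measurability without any completeness assumption on $\mathcal{A}$.
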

\begin{proof}
Subsequently, we write $J(\omega,u)$ instead of $J_\omega(u)$ for any $(\omega,u) \in \Omega\times L^2(D)$ for convenience.

Recall that for a fixed control $u \in L^2(D)$ the map $\Omega \ni \omega \mapsto S_\omega u \in L^2(D)$ is measurable.
This implies, by \cite[Proposition~1.2]{da2014stochastic}, that  $\Omega \ni \omega \mapsto \|S_\omega u -z\|_{L^2(D)} \in \mathbb{R}$ is measurable as well. 
From this, we can easily see that the map $J:\Omega\times L^2(D) \to \mathbb{R}$ defined in Problem~$(\mathrm{P}_\omega)$ is Carath\'{e}odory, i.e., for every $u \in L^2(D)$, $J(\cdot,u)$ is measurable and for every $\omega \in \Omega$, $J(\omega,\cdot)$ is continuous.
Since $J$ is  Carath\'{e}odory, we deduce from \cite[Theorem~8.2.11]{aubin2009set} that the set valued map $R$ defined by
\begin{align*}
\Omega \ni \omega \mapsto R(\omega):=\{u \in U_{ad}: J(\omega,u)=\min_{v \in U_{ad}}J(\omega,v)\},
\end{align*}
is measurable and there exists a measurable selection of $R$ by \cite[Theorem~8.1.3]{aubin2009set}. 
However, since Theorem~\ref{Thm:10} guarantees that for every $\omega \in \Omega$, the set $R(\omega)$ has a unique element which we denote by $u^\ast(\omega,\cdot)$, we conclude that the map $\Omega \ni \omega \mapsto u^\ast(\omega,\cdot) \in L^2(D)$ is the measurable selection of $R$.
This is the desired conclusion.
\end{proof}

\begin{theorem}
\label{Thm:15} 
Let Assumptions A1-A3 hold for some $0<t \leq 1$ and let $u^\ast$ be the random field defined in \eqref{Eqn:6}.
Then for any $u \in U_{ad}$ there holds
\begin{align} 
\label{Eqn:32} 
\|u^\ast(\omega,\cdot)\|_{L^2(D)}\lesssim C_{\ref{Thm:15}}(\omega):= \sqrt{ \left(\frac{2}{\alpha a_{\min}(\omega)^2}+1 \right)  \|u\|^2_{L^2(D)} + \frac{2}{\alpha} \|z\|^2_{L^2(D)} }.
\end{align}
Moreover,  $ u^\ast \in L^p(\Omega, L^2(D))$ for all $p \in [1,\infty)$.
\end{theorem}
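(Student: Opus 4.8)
The plan is to exploit the fact that $u^\ast(\omega,\cdot)$ is the global minimizer of $J_\omega$ over $U_{ad}$, so that its cost cannot exceed the cost of any competitor $u \in U_{ad}$. First I would fix $\omega$ in the full-measure set where Theorems~\ref{Thm:1} and \ref{Thm:10} apply, abbreviate $u_\omega := u^\ast(\omega,\cdot)$, and use the minimizing property $J_\omega(u_\omega)\le J_\omega(u)$. Discarding the nonnegative tracking term $\tfrac12\|S_\omega u_\omega - z\|_{L^2(D)}^2$ in $J_\omega(u_\omega)$ gives $\tfrac{\alpha}{2}\|u_\omega\|_{L^2(D)}^2 \le J_\omega(u) = \tfrac12\|S_\omega u - z\|_{L^2(D)}^2 + \tfrac{\alpha}{2}\|u\|_{L^2(D)}^2$, hence $\|u_\omega\|_{L^2(D)}^2 \le \tfrac{1}{\alpha}\|S_\omega u - z\|_{L^2(D)}^2 + \|u\|_{L^2(D)}^2$.

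Next I would estimate the data-misfit term. Using $\|S_\omega u - z\|_{L^2(D)}^2 \le 2\|S_\omega u\|_{L^2(D)}^2 + 2\|z\|_{L^2(D)}^2$, it remains to bound $\|S_\omega u\|_{L^2(D)}$. Since $S_\omega u \in H_0^1(D)$, the Poincar\'e inequality together with the energy estimate \eqref{Eqn:12} of Theorem~\ref{Thm:1} yields $\|S_\omega u\|_{L^2(D)} \lesssim |S_\omega u|_{H^1(D)} \lesssim \|u\|_{L^2(D)}/a_{\min}(\omega)$. Substituting back produces exactly $\|u_\omega\|_{L^2(D)}^2 \lesssim (\tfrac{2}{\alpha a_{\min}(\omega)^2}+1)\|u\|_{L^2(D)}^2 + \tfrac{2}{\alpha}\|z\|_{L^2(D)}^2 = C_{\ref{Thm:15}}(\omega)^2$, which is the claimed pathwise bound \eqref{Eqn:32}. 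The only point requiring care here is that the hidden constant (coming from Poincar\'e) is deterministic and $\omega$-independent, so that all $\omega$-dependence is carried explicitly by $a_{\min}(\omega)$.

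For the integrability statement I would first invoke Theorem~\ref{Thm:extra} to guarantee that $\omega\mapsto u^\ast(\omega,\cdot)$ is strongly measurable as a map into $L^2(D)$, so that the Bochner norm is well defined. Then I would show $C_{\ref{Thm:15}} \in L^q(\Omega)$ for every $q\in[1,\infty)$: since $C_{\ref{Thm:15}}(\omega)^2$ is an affine function of $a_{\min}(\omega)^{-2}$ with deterministic coefficients, and Assumption~A1 gives $a_{\min}^{-1}\in L^p(\Omega)$ for all $p$ (whence $a_{\min}^{-2}=(a_{\min}^{-1})^2\in L^p(\Omega)$ for all $p$), the quantity $C_{\ref{Thm:15}}^2$ lies in every $L^p(\Omega)$, so $C_{\ref{Thm:15}}$ lies in every $L^q(\Omega)$. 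Combining this with the pathwise bound gives $\|u^\ast\|_{L^p(\Omega,L^2(D))} \lesssim \|C_{\ref{Thm:15}}\|_{L^p(\Omega)} < \infty$, i.e. $u^\ast \in L^p(\Omega,L^2(D))$ for all $p\in[1,\infty)$.

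The argument is essentially routine \emph{a priori} estimation, and I do not expect a deep obstacle. The main things to watch are the bookkeeping of constants---keeping the Poincar\'e constant $\omega$-independent so that $a_{\min}(\omega)$ alone controls the randomness---and the elementary but decisive observation that A1 upgrades from $a_{\min}^{-1}$ to $a_{\min}^{-2}$ with unbounded integrability, which is precisely what yields finite moments of all orders.
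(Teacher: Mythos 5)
Your proposal is correct and follows essentially the same route as the paper: compare the cost of $u^\ast(\omega,\cdot)$ with that of an arbitrary admissible competitor $u$, drop the nonnegative tracking term, bound $\|S_\omega u\|_{L^2(D)}$ via Poincar\'e and the energy estimate \eqref{Eqn:12}, and then obtain the moments from Assumption~A1 and H\"older's inequality. Your additional remarks---making the measurability from Theorem~\ref{Thm:extra} explicit before using the Bochner norm, and tracking that the Poincar\'e constant is $\omega$-independent---are points the paper leaves implicit, but they do not change the argument.
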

\begin{proof}
We begin by establishing the bound in \eqref{Eqn:32}.
From the optimality of $u^\ast(\omega,\cdot)$ together with the estimate \eqref{Eqn:12} it follows that for any $u \in U_{ad}$ there holds
\begin{align*}
\frac{\alpha}{2} \|u^\ast(\omega,\cdot)\|^2_{L^2(D)} &\leq J(\omega,u^\ast(\omega,\cdot)) \leq J(\omega,u)=\frac{1}{2} \Vert S_{\omega} u-z \Vert_{L^2(D)}^2  + \frac{\alpha}{2} \Vert u \Vert_{L^2(D)} ^2 \\
& \lesssim \left(\frac{1}{a_{\min}(\omega)^2}+\frac{\alpha}{2} \right)  \|u\|^2_{L^2(D)} + \|z\|^2_{L^2(D)},
\end{align*}
from which we obtain the desired result. 
Finally, from the  estimate \eqref{Eqn:32} together with  Assumption~A1 and the H\"{o}lder inequality it follows that $u^\ast \in L^p(\Omega, L^2(D))$ for all $p \in [1,\infty)$.
This completes the proof.
\end{proof}
\begin{remark}\label{rem:more}
Under the assumptions of Theorem~\ref{Thm:15} and assuming that either the bounds $u_a$ and $u_b$ are finite, or $U_{ad}$ is bounded, or $0 \in U_{ad}$, it is possible to prove that $u^\ast \in L^\infty(\Omega, L^2(D))$.
\end{remark}
\begin{remark}
The estimate in \eqref{Eqn:32} holds for any $u \in U_{ad}$.
In the proof of Theorem~\ref{Thm:3} we will see that it is desirable to choose $u \in U_{ad}$ such that the constant $C_{\ref{Thm:15}}(\omega)$ in \eqref{Eqn:32} is small. 
This can be achieved by using the projection of $0$ onto $U_{ad}$, that is $u:=\min\{\max\{0,u_a\},u_b\}$ in \eqref{Eqn:32}.
\end{remark}

\section{Finite element discretization}\label{sec:FE}
\label{sec:1}

Let $\mathcal{T}_h$ be a triangulation of $D$ with maximum mesh size $h:=\max_{T \in \mathcal{T}_h}\mbox{diam}(T) $ such that 
\[
\bar D= \bigcup_{T \in \mathcal{T}_h} \bar T. 
\]
In addition, we assume that the triangulation is quasi-uniform in the sense that there exists a constant $\kappa >0$ (independent of $h$) such that each $T \in \mathcal{T}_h$ is contained in a ball of radius $\kappa^{-1}h$ and contains a ball of radius $\kappa h$. Finally, we define the space of linear finite elements
\[
X_{h}:= \{ v_h \in C^0(\bar D) : v_h \mbox{ is a linear polynomial on each }  T \in \mathcal{T}_h \mbox{ and } {v_h}_{ | \partial D}=0   \}.
\]

\subsection{The discrete state equation}
For $\mathbb{P}$-a.s. $\omega \in \Omega$, the finite element discretization of \eqref{Eqn:2} is defined as follows: Find $y_{\omega,h} \in  X_{h}$ such that
\begin{equation}
\label{Eqn:3}
 \int_D a_\omega \nabla y_{\omega,h}  \cdot  \nabla v_h \, dx = \int_D  u v_h \,dx \quad \forall v_h \in X_{h}.
\end{equation}

\begin{theorem}
\label{Thm:7}
Let Assumptions A1-A3 hold for some $0<t \leq 1$. Then, for $\mathbb{P}$-a.s. $\omega \in \Omega$, there exists a unique solution $y_{\omega,h} \in  X_{h}$ to \eqref{Eqn:3}. 
Moreover,
\begin{equation*}
|y_{\omega,h}|_{H^{1}(D)} \lesssim \frac{\|u\|_{L^2(D)}}{a_{\min}(\omega)}.
\end{equation*}
\end{theorem}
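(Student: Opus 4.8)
The plan is to mirror the continuous argument from the proof of Theorem~\ref{Thm:1}, exploiting that $X_h$ is a finite-dimensional (hence closed) subspace of $H_0^1(D)$ on which the bilinear form $b_\omega$ defined in \eqref{Eqn:13} inherits every property it enjoys on the full space. First I would observe that the restriction of $b_\omega$ to $X_h \times X_h$ is bounded and coercive with the same constants $a_{\max}(\omega)$ and $a_{\min}(\omega)$ as in \eqref{Eqn:14}, since those estimates hold for arbitrary elements of $H_0^1(D)$ and in particular for elements of $X_h \subset H_0^1(D)$. Likewise, the linear functional $l(v)=\int_D u v\,dx$ from \eqref{Eqn:13} remains an element of the dual of $X_h$. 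An application of the Lax--Milgram lemma on the finite-dimensional space $X_h$ then yields the existence of a unique $y_{\omega,h}\in X_h$ satisfying \eqref{Eqn:3}, for $\mathbb{P}$-a.s. $\omega\in\Omega$.

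For the a priori bound I would test \eqref{Eqn:3} with the choice $v_h = y_{\omega,h}\in X_h$, which is admissible, to obtain the discrete energy identity $\int_D a_\omega |\nabla y_{\omega,h}|^2\,dx = \int_D u\,y_{\omega,h}\,dx$. The coercivity estimate in \eqref{Eqn:14} bounds the left-hand side from below by $a_{\min}(\omega)\,|y_{\omega,h}|_{H^1(D)}^2$, while the right-hand side is controlled by the Cauchy--Schwarz inequality followed by the Poincar\'e inequality (valid on $H_0^1(D)$, hence on $X_h$), giving $\int_D u\,y_{\omega,h}\,dx \le \|u\|_{L^2(D)}\|y_{\omega,h}\|_{L^2(D)} \lesssim \|u\|_{L^2(D)}\,|y_{\omega,h}|_{H^1(D)}$. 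Combining the two estimates and cancelling one factor of $|y_{\omega,h}|_{H^1(D)}$ (the case $|y_{\omega,h}|_{H^1(D)}=0$ being trivial) delivers exactly $|y_{\omega,h}|_{H^1(D)} \lesssim \|u\|_{L^2(D)}/a_{\min}(\omega)$.

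I do not expect a genuine obstacle here, since the argument is the discrete analogue of the continuous energy estimate and does not require any additional regularity of $y_\omega$. The one point demanding care, and the reason the statement is worth recording separately, is the bookkeeping of constants: one must verify that the implied constant in $\lesssim$ is independent of both the realization $\omega$ and the mesh parameter $h$. This is indeed the case, because the only constant absorbed into $\lesssim$ is the Poincar\'e constant of the domain $D$, which is purely geometric, whereas the full $\omega$-dependence of the bound is made explicit through the factor $a_{\min}(\omega)^{-1}$. This uniformity is precisely what will later permit the passage from the pathwise estimate to bounds over the entire sample space via H\"older's inequality, exactly as in the derivation of $y\in L^p(\Omega,H^{1+s}(D))$ in Theorem~\ref{Thm:1}.
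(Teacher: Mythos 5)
Your proposal is correct and follows exactly the paper's route: the paper's proof simply invokes the Lax--Milgram lemma on $X_h$ as in the proof of Theorem~\ref{Thm:1}, which is precisely your argument (coercivity and boundedness of $b_\omega$ inherited on the subspace $X_h \subset H_0^1(D)$, then the energy estimate with Poincar\'e giving the $a_{\min}(\omega)^{-1}$ bound). Your additional care about the $\omega$- and $h$-independence of the hidden constant is consistent with what the paper's $\lesssim$ notation requires.
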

\begin{proof}
The result follows from applying the Lax-Milgram Lemma as in the proof of Theorem~\ref{Thm:1}.
\end{proof}

Thanks to Theorem~\ref{Thm:7}, we  introduce the following solution operator of \eqref{Eqn:3} for $\mathbb{P}$-a.s. $\omega \in \Omega$ 
\begin{equation}
\label{Eqn:18}
S_{\omega,h}:L^2(D) \to X_{h}
\end{equation}
such that $y_{\omega,h} :=S_{\omega,h}u$  is the solution of \eqref{Eqn:3} for a given $u \in L^2(D)$ and a realization $a_\omega$. Notice that the operator $S_{\omega,h}$ is bounded and linear.
The following result provides the error estimate in $H^1(D)$ associated with approximating $S_{\omega}u$ by $S_{\omega,h}u$. 
\begin{theorem}
\label{Thm:8}
Let Assumptions A1-A3 hold for some $0<t \leq 1$. Then 
\begin{equation} 
\label{Eqn:15}
|S_{\omega, h}u-S_{\omega}u |_{H^1(D)} \lesssim C_{\ref{Thm:8}}(\omega) \|u\|_{L^2(D)} h^{s},
\end{equation}
for $\mathbb{P}$-a.s. $\omega \in \Omega$  and for all $0< s <t$ except $s=\tfrac{1}{2}$, where  
\[
C_{\ref{Thm:8}}(\omega):= \Big(\dfrac{a^3_{\max}(\omega)}{a^{9}_{\min}(\omega)} \Big)^{\frac{1}{2}} \|a_\omega\|^2_{C^t(\bar D)}.
\] 
If $t=1$, the above estimate holds with $s=1$. 
\end{theorem}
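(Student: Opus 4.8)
The plan is to combine a Céa-type quasi-optimality estimate—carried out in the energy norm so as to obtain the sharp dependence on $a_{\min}(\omega)$ and $a_{\max}(\omega)$—with a fractional-order interpolation estimate and the regularity bound \eqref{Eqn:11}.

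First I would record Galerkin orthogonality: subtracting \eqref{Eqn:3} from \eqref{Eqn:2} restricted to $v=v_h \in X_h$ gives $b_\omega(y_\omega - y_{\omega,h},v_h)=0$ for all $v_h\in X_h$, where $b_\omega$ is the symmetric form \eqref{Eqn:13}. Writing $\|v\|_{a,\omega}^2:=b_\omega(v,v)$ for the energy norm and using that the Galerkin solution is the $b_\omega$-orthogonal projection, one obtains $\|y_\omega-y_{\omega,h}\|_{a,\omega}=\inf_{v_h\in X_h}\|y_\omega-v_h\|_{a,\omega}$ via Cauchy--Schwarz in the energy inner product. Sandwiching the energy norm between the coercivity and continuity bounds \eqref{Eqn:14}, i.e. $a_{\min}(\omega)|v|_{H^1(D)}^2 \le \|v\|_{a,\omega}^2 \le a_{\max}(\omega)|v|_{H^1(D)}^2$, yields the quasi-optimality estimate
\[
|y_\omega - y_{\omega,h}|_{H^1(D)} \le \sqrt{\frac{a_{\max}(\omega)}{a_{\min}(\omega)}}\,\inf_{v_h\in X_h}|y_\omega - v_h|_{H^1(D)}.
\]
It is essential to pass through the energy norm here: the cruder route $a_{\min}|v|^2\le b_\omega(v,v)$ combined with continuity would produce the Céa constant $a_{\max}/a_{\min}$ rather than its square root, and only the square-root factor is consistent with $C_{\ref{Thm:8}}(\omega)$.

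Next I would bound the best-approximation error by an interpolation error. By Theorem~\ref{Thm:1} we have $y_\omega\in H^{1+s}(D)$ for every $0<s<t$, $s\neq\tfrac12$. Choosing $v_h=I_h y_\omega$ for a suitable quasi-interpolation operator $I_h\colon H^1_0(D)\to X_h$ (e.g. of Scott--Zhang type, which is defined on all of $H^{1+s}(D)$ including the regime $s<\tfrac12$, where pointwise Lagrange interpolation is unavailable), the local approximation estimates on the quasi-uniform family $\mathcal{T}_h$ together with real interpolation between the integer-order cases $s=0$ and $s=1$ give
\[
\inf_{v_h\in X_h}|y_\omega-v_h|_{H^1(D)} \le |y_\omega-I_h y_\omega|_{H^1(D)} \lesssim h^s\,\|y_\omega\|_{H^{1+s}(D)}.
\]
Inserting the regularity bound \eqref{Eqn:11}, $\|y_\omega\|_{H^{1+s}(D)}\lesssim C_{\ref{Thm:1}}(\omega)\|u\|_{L^2(D)}$ with $C_{\ref{Thm:1}}(\omega)=a_{\max}(\omega)\|a_\omega\|_{C^t(\bar D)}^2/a_{\min}(\omega)^4$, and multiplying by the quasi-optimality factor $\sqrt{a_{\max}(\omega)/a_{\min}(\omega)}$, produces exactly
\[
\sqrt{\frac{a_{\max}(\omega)}{a_{\min}(\omega)}}\cdot\frac{a_{\max}(\omega)}{a_{\min}(\omega)^4}\,\|a_\omega\|_{C^t(\bar D)}^2 = \Big(\frac{a_{\max}^3(\omega)}{a_{\min}^9(\omega)}\Big)^{1/2}\|a_\omega\|_{C^t(\bar D)}^2 = C_{\ref{Thm:8}}(\omega),
\]
which is the claimed bound \eqref{Eqn:15}. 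The case $t=1$ is handled identically, using $y_\omega\in H^2(D)$ and the classical $O(h)$ interpolation estimate with $s=1$.

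I expect the main obstacle to be the interpolation step in the low-regularity regime $0<s<\tfrac12$: the Lagrange interpolant need not be well-defined since $H^{1+s}(D)$ is not embedded in $C^0(\bar D)$ for $d=3$, so one must invoke a quasi-interpolation operator and justify the fractional estimate $|w-I_h w|_{H^1(D)}\lesssim h^s|w|_{H^{1+s}(D)}$ by real interpolation, with the hidden constant depending only on the shape-regularity constant $\kappa$ and the dimension $d$, and not on $\omega$ or $h$. A secondary point requiring care is to verify that every constant suppressed in $\lesssim$ is genuinely independent of $\omega$, so that the entire $\omega$-dependence of the error is captured by the explicit factor $C_{\ref{Thm:8}}(\omega)$; this is exactly the careful tracking of generic constants emphasized in the introduction.
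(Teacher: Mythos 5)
Your proof is correct, and it is genuinely different in character from what the paper does: the paper does not prove Theorem~\ref{Thm:8} at all, but disposes of it in one line by citing Theorems~2.1 and~2.2 of \cite{teckentrup2013further}, whereas you give a self-contained derivation inside the paper's own framework. Your argument assembles three ingredients the paper already has on hand --- Galerkin orthogonality for the form $b_\omega$ from \eqref{Eqn:13}, the regularity bound \eqref{Eqn:11}, and the fractional interpolation estimate (which is exactly Lemma~\ref{Thm:9}, so you could simply cite it instead of re-justifying it via Scott--Zhang and real interpolation) --- and the decisive observation is the energy-norm form of C\'{e}a's lemma: passing through $\|\cdot\|_{a,\omega}$ gives the factor $\bigl(a_{\max}(\omega)/a_{\min}(\omega)\bigr)^{1/2}$, and multiplying by $C_{\ref{Thm:1}}(\omega)=a_{\max}(\omega)\|a_\omega\|^2_{C^t(\bar D)}/a_{\min}(\omega)^4$ reproduces \emph{exactly} the claimed constant $\bigl(a^3_{\max}(\omega)/a^{9}_{\min}(\omega)\bigr)^{1/2}\|a_\omega\|^2_{C^t(\bar D)}$, whereas the cruder coercivity-plus-continuity version of C\'{e}a would only yield $\bigl(a^4_{\max}(\omega)/a^{10}_{\min}(\omega)\bigr)^{1/2}\|a_\omega\|^2_{C^t(\bar D)}$ and hence would not match \eqref{Eqn:15}. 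What your route buys is transparency and verifiability within the paper (the $\omega$-dependence of every constant is tracked explicitly, in the spirit the introduction advertises); what the paper's route buys is brevity and consistency, since the cited reference is also the source of the regularity theorem and of the $L^2$ bound used later in Theorem~\ref{Thm:2}. The one cosmetic remark is that your concern about Lagrange interpolation failing for $s<\tfrac12$ is legitimate but already absorbed by the paper's Lemma~\ref{Thm:9}, which is stated for all $0<s\leq 1$ with a constant independent of $v$ and $h$.
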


\begin{proof}
The statement follows by combining Theorem 2.2 and Theorem 2.1 in \cite{teckentrup2013further}.
\end{proof}

In the next theorem we investigate the error in the difference $S_{\omega}u-S_{\omega,h}u$ in $L^2(D)$, but before that we need the following lemma which can be found for instance in \cite[Chapter 8]{hackbusch1992elliptic}.

\begin{lemma}
\label{Thm:9}
Let $v \in H^1_0(D) \cap H^{1+s}(D)$ for some $0<s \leq 1$. Then
\begin{align*}
\inf_{v_h \in X_h} |v -v_h |_{H^1(D)} \lesssim  \|v \|_{H^{1+s}(D)} h^s, 
\end{align*}
where the hidden constant is independent of $v$ and $h$.
\end{lemma}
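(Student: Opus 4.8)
The plan is to reduce the fractional-order estimate to the two integer-order cases $s=0$ and $s=1$ by means of a quasi-interpolation operator together with a real-interpolation argument. Concretely, I would fix a Scott--Zhang (or Cl\'ement) quasi-interpolation operator $I_h:H^1_0(D)\to X_h$, constructed so that it preserves the homogeneous Dirichlet boundary condition and is stable in the $H^1$-seminorm, and then use the trivial bound $\inf_{v_h\in X_h}|v-v_h|_{H^1(D)}\le|v-I_hv|_{H^1(D)}$. Everything therefore comes down to estimating the single error operator $E_h:=I-I_h$ acting on $H^{1+s}(D)$.

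First I would record the two endpoint bounds for $E_h$. At the lower endpoint, the $H^1$-stability of $I_h$ and the triangle inequality give $|E_hv|_{H^1(D)}\lesssim\|v\|_{H^1(D)}$, so $E_h$ maps $H^1(D)\to H^1(D)$ with norm $\lesssim 1=h^0$. At the upper endpoint, the classical Bramble--Hilbert local approximation estimates for piecewise linear elements, summed over the quasi-uniform triangulation of \S\ref{sec:FE}, yield $|E_hv|_{H^1(D)}\lesssim h\,\|v\|_{H^2(D)}$, so $E_h$ maps $H^2(D)\to H^1(D)$ with norm $\lesssim h$. Both are entirely standard and require only the mesh regularity assumed in \S\ref{sec:FE}.

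Next I would interpolate these two bounds. Using the real-interpolation identification $H^{1+s}(D)=[H^1(D),H^2(D)]_{s,2}$, valid on the bounded Lipschitz domain $D$, together with the interpolation theorem for linear operators (if $T:A_0\to B$ and $T:A_1\to B$ have norms $M_0$ and $M_1$, then $T:[A_0,A_1]_{s,2}\to B$ has norm $\lesssim M_0^{1-s}M_1^{s}$), I obtain $|E_hv|_{H^1(D)}\lesssim (h^0)^{1-s}(h)^{s}\|v\|_{H^{1+s}(D)}=h^{s}\|v\|_{H^{1+s}(D)}$. Combined with the infimum bound above this is exactly the claim; the endpoint $s=1$ is recovered directly from the upper estimate.

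I expect the main obstacle to lie in the boundary behaviour: one must ensure both that $I_hv$ is a genuine element of $X_h$ (i.e.\ that the quasi-interpolant inherits the zero trace of $v$) and that the interpolation-space identification used above is compatible with the homogeneous boundary condition, which is delicate near $s=\tfrac12$. A more hands-on alternative that avoids interpolation theory is to sum the localized fractional Scott--Zhang estimates $|v-I_hv|_{H^1(T)}\lesssim h_T^{s}|v|_{H^{1+s}(S_T)}$ over the element patches $S_T$; there the difficulty shifts to controlling $\sum_T|v|^2_{H^{1+s}(S_T)}$ by $|v|^2_{H^{1+s}(D)}$, which is not immediate because the fractional seminorm is \emph{nonlocal} and the patch double integrals overlap, so a finite-overlap argument must be carried out with care.
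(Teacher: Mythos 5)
Your argument is correct, but there is little in the paper to compare it against: the paper does not prove this lemma at all, it simply cites \cite[Chapter 8]{hackbusch1992elliptic}, and the textbook argument behind that citation is essentially the one you propose (integer-order endpoint estimates for a quasi-interpolant combined with space interpolation). So you have reconstructed the standard proof rather than diverged from it. One simplification is worth pointing out: the obstacle you flag --- compatibility of the homogeneous boundary condition with the interpolation-space identification, delicate near $s=\tfrac12$ --- disappears if you set the argument up slightly differently. Interpolate the error operator $E_h=I-I_h$ on the couple $\big(H^1(D),H^2(D)\big)$ \emph{without} any boundary conditions; the identification $[H^1(D),H^2(D)]_{s,2}=H^{1+s}(D)$ on the bounded Lipschitz domain $D$ follows from a Stein extension and is unproblematic for every $0<s<1$, including $s=\tfrac12$. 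The zero trace of $v$ is needed only to guarantee that $I_hv$ is an admissible competitor in the infimum, i.e.\ that $I_hv\in X_h$, and the Scott--Zhang operator gives you exactly that whenever $v\in H^1_0(D)$; the Dirichlet condition never has to enter the interpolation step. This matters here because the lemma, unlike the surrounding results of the paper, does not exclude $s=\tfrac12$. Your alternative patch-local route also works --- the overlap issue is resolved by noting that, by quasi-uniformity, any pair $(x,y)\in D\times D$ lies in at most a bounded number of products $S_T\times S_T$, so $\sum_T |v|^2_{H^{1+s}(S_T)}\lesssim |v|^2_{H^{1+s}(D)}$ --- but it requires fractional-order local estimates for $I_h$ and is genuinely more work than the operator-interpolation route.
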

 
\begin{theorem}
\label{Thm:2}
Let Assumptions A1-A3 hold for some $0<t \leq 1$. Then 
\begin{equation}
\label{Eqn:16}
\|S_{\omega, h}u-S_{\omega}u \|_{L^2(D)} \lesssim C_{\ref{Thm:2}}(\omega) \|u\|_{L^2(D)} h^{2 s},
\end{equation}
for $\mathbb{P}$-a.s. $\omega \in \Omega$  and for all $0< s <t$ except $s=\tfrac{1}{2}$, where  
\[
C_{\ref{Thm:2}}(\omega):= \Big(\dfrac{a^7_{\max}(\omega)}{a^{17}_{\min}(\omega)} \Big)^{\frac{1}{2}} \|a_\omega\|^4_{C^t(\bar D)}.
\] 
Moreover,
\begin{equation}
\label{Eqn:17}
\|S_h u-S u \|_{L^p(\Omega, L^2(D))} \leq c h^{2 s},  \quad \mbox{ for all } p \in [1,\infty),
\end{equation}
with $c>0$ independent of $\omega$ and $h$. If $t=1$, the above two estimates hold with $s=1$. 
\end{theorem}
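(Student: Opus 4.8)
The plan is to run an Aubin--Nitsche duality argument, keeping careful track of how every constant depends on $a_{\min}(\omega)$, $a_{\max}(\omega)$ and $\|a_\omega\|_{C^t(\bar D)}$, and then to feed in the $H^1$ estimate already established in Theorem~\ref{Thm:8}. Fix $\omega$ and set $e:=S_{\omega,h}u-S_\omega u=y_{\omega,h}-y_\omega\in H^1_0(D)$. First I would introduce the dual problem: let $w_\omega\in H^1_0(D)$ solve
\[
b_\omega(v,w_\omega)=\int_D e\,v\,dx\qquad\forall\,v\in H^1_0(D),
\]
with $b_\omega$ the symmetric bilinear form from \eqref{Eqn:13}. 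Since $e\in L^2(D)$, Theorem~\ref{Thm:1} applies verbatim with $e$ playing the role of the right-hand side, giving $w_\omega\in H^{1+s}(D)$ and $\|w_\omega\|_{H^{1+s}(D)}\lesssim C_{\ref{Thm:1}}(\omega)\|e\|_{L^2(D)}$.

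The core of the argument then combines symmetry, Galerkin orthogonality and best approximation. Testing the dual problem with $v=e$ yields $\|e\|_{L^2(D)}^2=b_\omega(e,w_\omega)$. Because $X_h\subset H^1_0(D)$, subtracting the discrete equation \eqref{Eqn:3} from \eqref{Eqn:2} gives the orthogonality $b_\omega(e,v_h)=0$ for all $v_h\in X_h$, so $\|e\|_{L^2(D)}^2=b_\omega(e,w_\omega-w_{\omega,h})$ for any $w_{\omega,h}\in X_h$. Applying the continuity bound \eqref{Eqn:14}, taking the infimum over $w_{\omega,h}$ and invoking Lemma~\ref{Thm:9} I obtain
\[
\|e\|_{L^2(D)}^2\leq a_{\max}(\omega)\,|e|_{H^1(D)}\inf_{w_{\omega,h}\in X_h}|w_\omega-w_{\omega,h}|_{H^1(D)}\lesssim a_{\max}(\omega)\,C_{\ref{Thm:1}}(\omega)\,|e|_{H^1(D)}\,\|e\|_{L^2(D)}\,h^s.
\]
Dividing by $\|e\|_{L^2(D)}$ and inserting the $H^1$ bound $|e|_{H^1(D)}\lesssim C_{\ref{Thm:8}}(\omega)\|u\|_{L^2(D)}h^s$ from \eqref{Eqn:15} produces the rate $h^{2s}$. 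What remains is pure bookkeeping: a direct computation of the product of the three constants gives $a_{\max}(\omega)C_{\ref{Thm:1}}(\omega)C_{\ref{Thm:8}}(\omega)=\big(a^7_{\max}(\omega)/a^{17}_{\min}(\omega)\big)^{1/2}\|a_\omega\|^4_{C^t(\bar D)}=C_{\ref{Thm:2}}(\omega)$, exactly the pathwise constant in \eqref{Eqn:16}.

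For the moreover statement I would raise \eqref{Eqn:16} to the $p$-th power and integrate over $\Omega$, so that $\|S_hu-Su\|_{L^p(\Omega,L^2(D))}\lesssim\|C_{\ref{Thm:2}}\|_{L^p(\Omega)}\|u\|_{L^2(D)}h^{2s}$, and it then suffices to check $\|C_{\ref{Thm:2}}\|_{L^p(\Omega)}<\infty$ for every finite $p$. This follows because Assumption~A1 places $a_{\min}^{-17/2}$ in every $L^q(\Omega)$, while $a_{\max}=\|a\|_{C^0(\bar D)}\leq\|a\|_{C^t(\bar D)}$ together with Assumption~A2 places $a_{\max}^{7/2}$ and $\|a_\omega\|^4_{C^t(\bar D)}$ in every $L^q(\Omega)$; H\"older's inequality applied to the triple product then gives finiteness, with $c$ independent of $h$ and $\omega$. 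The case $t=1$ is identical, using the $s=1$ versions of Theorems~\ref{Thm:1} and~\ref{Thm:8}.

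I do not expect a genuine obstacle here: the duality argument is standard and the only delicate point is getting the exponents $7$ and $17$ right when the three constants are multiplied. The one thing I would verify explicitly is that Theorem~\ref{Thm:1} really does apply to the dual problem, which it does because $b_\omega$ is symmetric and the dual datum $e$ lies in $L^2(D)$.
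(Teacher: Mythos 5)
Your proposal is correct and follows essentially the same route as the paper: the identical Aubin--Nitsche duality argument with the dual problem handled by Theorem~\ref{Thm:1}, Galerkin orthogonality combined with the continuity bound \eqref{Eqn:14} and Lemma~\ref{Thm:9}, insertion of the $H^1$ estimate \eqref{Eqn:15}, and H\"older's inequality with Assumptions A1--A2 for the $L^p(\Omega,L^2(D))$ bound. Your bookkeeping of the constant, $a_{\max}(\omega)\,C_{\ref{Thm:1}}(\omega)\,C_{\ref{Thm:8}}(\omega)=\big(a^7_{\max}(\omega)/a^{17}_{\min}(\omega)\big)^{1/2}\|a_\omega\|^4_{C^t(\bar D)}$, is also exactly what the paper's proof yields.
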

\begin{proof}
The key idea is a duality argument.
For $\mathbb{P}$-a.s. $\omega \in \Omega$, let $e_\omega:=S_{\omega, h}u-S_{\omega}u$ and let $\tilde y_\omega \in H^1_0(D)$ be the solution of the problem
\begin{align*}
 b_\omega(\tilde y_\omega,v)=(e_\omega,v)_{L^2(D)}:=\int_D e_\omega v \, dx \quad \forall \, v \in H^1_0(D),
\end{align*}
where $b_\omega$ is the bilinear form defined in \eqref{Eqn:13}. Observe that $\tilde y_\omega \in H^{1+s}(D)$ according to Theorem~\ref{Thm:1}.
Moreover, from the Galerkin orthogonality there holds
\begin{align*}
 b_\omega(e_\omega,v_h)=0 \quad \forall \, v_h \in X_h.
\end{align*}
Hence, we have
\begin{align*}
\|e_\omega \|^2_{L^2(D)} &=  (e_\omega, e_\omega)_{L^2(D)} =  b_\omega(\tilde y_\omega,e_\omega) =b_\omega(\tilde y_\omega -v_h,e_\omega)   \\
& \leq a_{\max}(\omega) |\tilde y_\omega -v_h |_{H^1(D)}  |e_\omega|_{H^1(D)}  \quad (\mbox{using } \eqref{Eqn:14})\\
& \leq a_{\max}(\omega) \|\tilde y_\omega \|_{H^{1+s}(D)}  |e_\omega|_{H^1(D)} h^s \quad (\mbox{using Lemma~\ref{Thm:9}})\\
& \lesssim a_{\max}(\omega)  C_{\ref{Thm:1}}(\omega)  \|e_\omega\|_{L^2(D)}  |e_\omega|_{H^1(D)} h^s \quad (\mbox{using } \eqref{Eqn:11})\\
& \lesssim a_{\max}(\omega)  C_{\ref{Thm:1}}(\omega)  \|e_\omega\|_{L^2(D)} C_{\ref{Thm:8}}(\omega) \|u\|_{L^2(D)} h^{2s} \quad (\mbox{using } \eqref{Eqn:15}).
\end{align*}
Dividing both sides of the previous inequality by $\|e_\omega \|_{L^2(D)}$ gives the estimate \eqref{Eqn:16} from which we get  \eqref{Eqn:17} after applying the H\"{o}lder inequality together with  the Assumptions A1-A2. This completes the proof.
\end{proof}

\begin{remark}
The order of convergence $O(h^{2s})$ in the estimate \eqref{Eqn:16} is obtained while assuming that the integrals in \eqref{Eqn:3} are computed exactly. In general, those integrals can't be computed exactly, instead, they are approximated by quadrature which introduces another sort of error that one should consider. However, it is still possible to achieve the order  $O(h^{2s})$ in \eqref{Eqn:16} even with quadrature provided that $a(\omega,\cdot)$  belongs to at least $C^{2s}(\bar D)$ as it was explained in \cite[\S~3.3]{charrier2013finite}. It is important to mentioned this at this stage because all the upcoming error estimates related to the optimal control problem are heavily depending on \eqref{Eqn:16}.
\end{remark}

\subsection{The discrete optimal control problem}
In this section we discretize the problem $(\mathrm{P}_\omega)$ via the variational discretization approach developed in \cite{hinze2005variational}.  
For $\mathbb{P}$-a.s. $\omega \in \Omega$, the variational discretization  of $(\mathrm{P}_\omega)$  reads
\[
(\mathrm{P}_{\omega,h}) \quad
\begin{array}{l}
 \min_{u \in U_{ad}} J_{\omega,h}(u):=\frac{1}{2} \Vert S_{\omega, h}u-z \Vert_{L^2(D)}^2  + \dfrac{\alpha}{2} \Vert u \Vert_{L^2(D)} ^2, \\
\end{array}
\]
where $S_{\omega, h}:L^2(D) \to X_{h}$ is the solution operator of \eqref{Eqn:3} as introduced in \eqref{Eqn:18}.
The key idea of the variational discretization is to discretize only the state equation by finite elements (usually piecewise linear continuous FEs) while the control is still sought in $U_{ad}$. 
Hence problem $(\mathrm{P}_{\omega,h})$ is again an optimization problem in infinite dimensions.
Thus all techniques we used previously to study problem  $(\mathrm{P}_{\omega})$ can also be used for $(\mathrm{P}_{\omega,h})$.
A detailed study of the variational discretization together with its numerical implementation and comparisons to classical discretizations can be found in \cite{hinze2005variational} and \cite[Chapter 3]{pinnau2008optimization}.

Analogously to Theorem~\ref{Thm:10}, one can show that for $\mathbb{P}$-a.s. $\omega \in \Omega$ the problem $(\mathrm{P}_{\omega,h})$ admits a unique global solution which we denote by $u_{\omega, h}$. The next theorem gives the first order optimality conditions of $(\mathrm{P}_{\omega,h})$. For the proof we refer the reader to \cite[Chapter~3]{pinnau2008optimization}.

\begin{theorem}
\label{Thm:12}
A feasible control $u_{\omega,h} \in U_{ad}$ is a solution of $(\mathrm{P}_{\omega,h})$ if and only if there exist a state $y_{\omega,h} \in X_{h}$ and an adjoint state $p_{\omega,h} \in X_{h}$ such that there holds
\begin{align*}
&\int_D a_\omega \nabla  y_{\omega,h}  \cdot  \nabla v_h  \, dx  =  \int_D  u_{\omega,h} v_h \,dx \quad \forall \, v_h \in X_{h},   \\
&\int_D a_\omega \nabla  p_{\omega,h}   \cdot  \nabla v_h  \, dx  =  \int_D (y_{\omega,h} -z) v_h \, dx   \quad \forall \, v_h \in X_{h}, \\
&\int_D (p_{\omega,h} + \alpha  u_{\omega,h})(u-u_{\omega,h}) \, dx \geq  0 \quad \forall \, u \in U_{ad}. 
\end{align*}
\end{theorem}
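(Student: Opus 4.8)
The plan is to treat $(\mathrm{P}_{\omega,h})$ exactly as its continuous counterpart in Theorem~\ref{Thm:11}, since it is again a smooth convex minimization over the closed convex set $U_{ad}$. First I would record that $J_{\omega,h}$ is Fr\'echet differentiable on $L^2(D)$ with directional derivative
\[
\langle J_{\omega,h}'(u), v\rangle = (S_{\omega,h}u - z, S_{\omega,h}v)_{L^2(D)} + \alpha (u,v)_{L^2(D)},
\]
which follows immediately from the linearity and boundedness of $S_{\omega,h}$ established in Theorem~\ref{Thm:7}. Because $J_{\omega,h}$ is convex and $U_{ad}$ is convex, a feasible $u_{\omega,h}$ minimizes $J_{\omega,h}$ over $U_{ad}$ if and only if the variational inequality $\langle J_{\omega,h}'(u_{\omega,h}), u - u_{\omega,h}\rangle \geq 0$ holds for all $u \in U_{ad}$. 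The \emph{only if} direction comes from differentiating $\theta \mapsto J_{\omega,h}(u_{\omega,h} + \theta(u-u_{\omega,h}))$ at $\theta = 0^+$ along the feasible segment $u_{\omega,h} + \theta(u - u_{\omega,h}) \in U_{ad}$, and the \emph{if} direction from the convexity estimate $J_{\omega,h}(u) \geq J_{\omega,h}(u_{\omega,h}) + \langle J_{\omega,h}'(u_{\omega,h}), u - u_{\omega,h}\rangle$.

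Next I would rewrite the gradient term $(S_{\omega,h}u_{\omega,h} - z, S_{\omega,h}v)_{L^2(D)}$ by means of a discrete adjoint state. Setting $y_{\omega,h} := S_{\omega,h}u_{\omega,h}$ (the first displayed equation of the statement) and defining $p_{\omega,h} \in X_{h}$ as the unique solution---existence and uniqueness again by the Lax--Milgram argument of Theorem~\ref{Thm:7}, now with right-hand side $y_{\omega,h} - z \in L^2(D)$---of
\[
b_\omega(p_{\omega,h}, v_h) = (y_{\omega,h} - z, v_h)_{L^2(D)} \quad \forall \, v_h \in X_{h},
\]
which is the second displayed equation, I would prove the representation $(y_{\omega,h} - z, S_{\omega,h}v)_{L^2(D)} = (p_{\omega,h}, v)_{L^2(D)}$ for every $v \in L^2(D)$. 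Substituting this into the variational inequality collapses it to $(p_{\omega,h} + \alpha u_{\omega,h}, u - u_{\omega,h})_{L^2(D)} \geq 0$ for all $u \in U_{ad}$, which is exactly the third condition; the three relations together form the asserted optimality system.

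The one genuinely substantive step---and the place to be careful---is the adjoint representation. Writing $\psi_h := S_{\omega,h}v \in X_{h}$, so that $b_\omega(\psi_h, w_h) = (v, w_h)_{L^2(D)}$ for all $w_h \in X_{h}$, the crucial observation is that $p_{\omega,h}$ and $\psi_h$ lie in the \emph{same} discrete space $X_{h}$ and are therefore each admissible test functions in the other's Galerkin equation. Testing the adjoint equation with $v_h = \psi_h$, exploiting the symmetry of $b_\omega$, and then testing the state equation for $\psi_h$ with $w_h = p_{\omega,h}$ gives
\[
(y_{\omega,h} - z, \psi_h)_{L^2(D)} = b_\omega(p_{\omega,h}, \psi_h) = b_\omega(\psi_h, p_{\omega,h}) = (v, p_{\omega,h})_{L^2(D)},
\]
which is the required identity. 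This is precisely where the variational discretization pays off: because the control is left undiscretized while state and adjoint are both sought in $X_{h}$, the discrete adjoint $p_{\omega,h}$ coincides with $S_{\omega,h}^{\ast}(y_{\omega,h} - z)$ with no consistency gap, and the argument mirrors the continuous proof of Theorem~\ref{Thm:11} line for line. Beyond this I expect no real obstacle; only the well-posedness of the discrete adjoint equation needs checking, and that is inherited directly from Theorem~\ref{Thm:7}.
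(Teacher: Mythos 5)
Your proof is correct: the convexity-based variational inequality, the well-posedness of the discrete adjoint equation via Lax--Milgram on $X_h$, and the adjoint representation $(y_{\omega,h}-z,S_{\omega,h}v)_{L^2(D)}=(p_{\omega,h},v)_{L^2(D)}$ using the symmetry of $b_\omega$ together yield exactly the asserted optimality system. The paper itself does not spell this out but simply refers to \cite[Chapter~3]{pinnau2008optimization}, and the argument given there is precisely the one you reconstructed, so your proposal matches the paper's (deferred) proof essentially line for line.
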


The next result provides a key ingredient to establish the error estimate in approximating  the solution of  $(\mathrm{P}_{\omega})$  by the one of $(\mathrm{P}_{\omega,h})$ for a given $\omega \in \Omega$.
\begin{theorem}
\label{Thm:4}
For $\mathbb{P}$-a.s. $\omega \in \Omega$, let $(u_{\omega},y_{\omega},p_{\omega}) \in L^2(D)\times H_0^1(D) \times H_0^1(D)$ satisfy the optimality conditions of $(\mathrm{P}_{\omega})$ and let  $(u_{\omega,h},y_{\omega,h},p_{\omega,h}) \in L^2(D)\times X_{h} \times X_{h}$ satisfy the optimality conditions of $(\mathrm{P}_{\omega,h})$. Then
\begin{align*}
\frac{\alpha}{2} \|u_{\omega}-u_{\omega,h}\|^2_{L^2(D)} + \frac{1}{2} \|y_{\omega}-y_{\omega,h}\|^2_{L^2(D)} & \leq \frac{1}{2} \|S_{\omega}u_\omega-S_{\omega, h}u_\omega\|^2_{L^2(D)} \\
 & \quad + \frac{1}{2\alpha} \|S_{\omega}(y_\omega-z)-S_{\omega,h}(y_\omega-z)\|^2_{L^2(D)}.
\end{align*} 
\end{theorem}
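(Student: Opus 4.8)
The plan is to adapt the classical error analysis for the variational discretization (as in \cite{hinze2005variational,pinnau2008optimization}) to the present pathwise setting, keeping $\omega$ fixed throughout so that everything reduces to a deterministic computation involving the two solution operators $S_\omega$ and $S_{\omega,h}$. First I would record that, because the state and adjoint equations in Theorems~\ref{Thm:11} and \ref{Thm:12} share the \emph{symmetric} bilinear form $b_\omega$, the operators $S_\omega$ and $S_{\omega,h}$ are self-adjoint on $L^2(D)$, and that the optimality systems give the identifications $y_\omega = S_\omega u_\omega$, $p_\omega = S_\omega(y_\omega - z)$, $y_{\omega,h} = S_{\omega,h} u_{\omega,h}$ and $p_{\omega,h} = S_{\omega,h}(y_{\omega,h} - z)$. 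The first real step is to test the continuous variational inequality with $u = u_{\omega,h}$ and the discrete one with $u = u_\omega$ and add them. The feasibility $u_\omega,u_{\omega,h}\in U_{ad}$ makes both admissible, and after collecting the sign-definite terms one obtains the basic estimate
\[
\alpha \,\|u_\omega - u_{\omega,h}\|^2_{L^2(D)} \le (p_{\omega,h} - p_\omega,\, u_\omega - u_{\omega,h})_{L^2(D)}.
\]

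Next I would rewrite the right-hand side so that only genuine finite element discretization errors of the state and adjoint equations, evaluated at the continuous optimum, survive. To this end I insert the auxiliary discrete adjoint $\widetilde p_{\omega,h} := S_{\omega,h}(y_\omega - z)$ and split $p_{\omega,h} - p_\omega = (p_{\omega,h} - \widetilde p_{\omega,h}) + (\widetilde p_{\omega,h} - p_\omega)$. The second summand equals $S_{\omega,h}(y_\omega - z) - S_\omega(y_\omega - z)$, precisely the adjoint FE error on the right-hand side of the claim. For the first summand I use $p_{\omega,h} - \widetilde p_{\omega,h} = S_{\omega,h}(y_{\omega,h} - y_\omega)$ together with the self-adjointness of $S_{\omega,h}$ to move the operator onto $u_\omega - u_{\omega,h}$, producing $(y_{\omega,h} - y_\omega,\, S_{\omega,h}u_\omega - y_{\omega,h})_{L^2(D)}$. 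Decomposing $S_{\omega,h}u_\omega - y_{\omega,h} = (S_{\omega,h}u_\omega - S_\omega u_\omega) + (y_\omega - y_{\omega,h})$ then isolates the state FE error $S_{\omega,h}u_\omega - S_\omega u_\omega$ and, crucially, generates the negative term $-\|y_\omega - y_{\omega,h}\|^2_{L^2(D)}$, which I move to the left-hand side.

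Finally I would apply Cauchy--Schwarz to the two remaining cross terms and Young's inequality with weights chosen to absorb the quadratic leftovers: weight $1$ on the state pairing (to swallow half of $\|y_\omega - y_{\omega,h}\|^2$) and weight $\alpha$ on the control pairing (to swallow half of $\alpha\|u_\omega - u_{\omega,h}\|^2$). Rearranging then yields exactly
\[
\frac{\alpha}{2}\|u_\omega - u_{\omega,h}\|^2_{L^2(D)} + \frac12\|y_\omega - y_{\omega,h}\|^2_{L^2(D)} \le \frac12\|S_\omega u_\omega - S_{\omega,h} u_\omega\|^2_{L^2(D)} + \frac{1}{2\alpha}\|S_\omega(y_\omega - z) - S_{\omega,h}(y_\omega - z)\|^2_{L^2(D)}.
\]
I expect the only genuine obstacle to be the bookkeeping in the middle step: one must introduce the two mixed quantities $\widetilde p_{\omega,h}$ and $S_{\omega,h}u_\omega$ in the correct order and apply the self-adjointness of $S_{\omega,h}$ at exactly the right moment, since a misplaced insertion leaves a term mixing $S_\omega$ and $S_{\omega,h}$ that cannot be matched against the two pure FE errors on the right-hand side. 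The remaining manipulations are routine.
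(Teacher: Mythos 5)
Your proof is correct. It differs from the paper's in presentation rather than in substance: the paper's entire proof is the observation that, for fixed $\omega$, the optimality systems of $(\mathrm{P}_\omega)$ and $(\mathrm{P}_{\omega,h})$ are deterministic, followed by a citation of \cite[Theorem~3.4]{pinnau2008optimization}; you perform the same reduction to a fixed-$\omega$ deterministic problem but then prove the deterministic inequality from scratch. Your chain of steps --- cross-testing the two variational inequalities (legitimate precisely because the variational discretization keeps $u_{\omega,h}\in U_{ad}$, so each optimal control is an admissible test function for the other problem), obtaining $\alpha\|u_\omega-u_{\omega,h}\|^2_{L^2(D)}\le (p_{\omega,h}-p_\omega,\,u_\omega-u_{\omega,h})_{L^2(D)}$, inserting the auxiliary adjoint $\widetilde p_{\omega,h}=S_{\omega,h}(y_\omega-z)$, exploiting the self-adjointness of $S_{\omega,h}$ (valid since $b_\omega$ is symmetric) to extract the negative term $-\|y_\omega-y_{\omega,h}\|^2_{L^2(D)}$, and closing with Cauchy--Schwarz and appropriately weighted Young inequalities --- is exactly the classical argument behind the cited result, and all steps check out, including the constants $\tfrac12$ and $\tfrac{1}{2\alpha}$. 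What your version buys is self-containedness: it makes visible where the two structural ingredients (admissibility of $u_{\omega,h}$ in $U_{ad}$ and symmetry of $b_\omega$) enter, and it confirms that the deterministic proof carries over verbatim pathwise with constants independent of $\omega$, which is what the paper's subsequent integration over $\Omega$ in Theorem~\ref{Thm:3} relies on. What the paper's version buys is brevity, at the cost of sending the reader to the literature.
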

\begin{proof}
For a given $\omega \in \Omega$, the systems in Theorem~\ref{Thm:11} and Theorem~\ref{Thm:12} are deterministic. Hence, it is sufficient to apply \cite[Theorem~3.4]{pinnau2008optimization}. 
\end{proof}

Analogously to the random field $u^\ast$ defined in \eqref{Eqn:6}, we introduce the discrete random field $u_h^\ast:\Omega\times D \to \mathbb{R}$ whose realization $u_h^\ast(\omega,\cdot)$ is the solution of $(\mathrm{P}_{\omega,h})$ for the given $\omega \in \Omega$ and mesh size $h$. 
Precisely, let
\begin{equation}
\label{Eqn:7}
u_h^\ast: \Omega \times D \to \mathbb{R} \, \mbox{ such that } \, u_h^\ast(\omega,\cdot):= \min_{u \in U_{ad}} J_{\omega,h}(u) \quad \mbox{ for } \mathbb{P}\mbox{-a.s. } \omega \in \Omega.
\end{equation}
That $u^\ast_h$ is indeed a random field can be proved analogously to the proof of Theorem~\ref{Thm:extra}. Next, we establish some properties of $u^\ast_h$.
\begin{theorem}
\label{Thm:16} 
Let Assumptions A1-A3 hold for some $0<t \leq 1$ and let $u^\ast_h$ be the random field defined in \eqref{Eqn:7}.
Then for any $u \in U_{ad}$ there holds
\begin{align} 
\label{Eqn:33} 
\|u_h^\ast(\omega,\cdot)\|_{L^2(D)}\lesssim C_{\ref{Thm:16}}(\omega):= \sqrt{ \left(\frac{2}{\alpha a_{\min}(\omega)^2}+1 \right)  \|u\|^2_{L^2(D)} + \frac{2}{\alpha} \|z\|^2_{L^2(D)} }.
\end{align}
Moreover,  $ u_h^\ast \in L^p(\Omega, L^2(D))$ for all $p \in [1,\infty)$.
\end{theorem}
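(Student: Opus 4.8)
The plan is to follow the proof of Theorem~\ref{Thm:15} line by line, replacing the continuous solution operator $S_\omega$ by its finite element counterpart $S_{\omega,h}$ and the stability estimate \eqref{Eqn:12} by its discrete analogue. The decisive observation is that Theorem~\ref{Thm:7} supplies a discrete $H^1$-stability bound of exactly the same form as \eqref{Eqn:12}, namely $|S_{\omega,h}u|_{H^1(D)} \lesssim \|u\|_{L^2(D)}/a_{\min}(\omega)$, with identical dependence on the realization through $a_{\min}(\omega)$. Combining this with the Poincar\'e--Friedrichs inequality (whose constant is absorbed into $\lesssim$) yields $\|S_{\omega,h}u\|_{L^2(D)} \lesssim \|u\|_{L^2(D)}/a_{\min}(\omega)$, which is the only property of $S_{\omega,h}$ that the argument requires.

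First I would exploit that $u_h^\ast(\omega,\cdot)$ minimizes $J_{\omega,h}$ over $U_{ad}$, so that for any admissible $u$
\begin{align*}
\frac{\alpha}{2}\|u_h^\ast(\omega,\cdot)\|^2_{L^2(D)} \le J_{\omega,h}(u_h^\ast(\omega,\cdot)) \le J_{\omega,h}(u) = \frac12\|S_{\omega,h}u - z\|^2_{L^2(D)} + \frac{\alpha}{2}\|u\|^2_{L^2(D)}.
\end{align*}
Next I would bound the tracking term by the triangle inequality, $\|S_{\omega,h}u - z\|^2_{L^2(D)} \le 2\|S_{\omega,h}u\|^2_{L^2(D)} + 2\|z\|^2_{L^2(D)}$, and insert the discrete stability bound above to replace $\|S_{\omega,h}u\|^2_{L^2(D)}$ by a multiple of $\|u\|^2_{L^2(D)}/a_{\min}(\omega)^2$. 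Collecting terms and dividing by $\alpha/2$ produces precisely the bound \eqref{Eqn:33} with the constant $C_{\ref{Thm:16}}(\omega)$, which coincides structurally with $C_{\ref{Thm:15}}(\omega)$ as expected.

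The integrability statement then follows exactly as in Theorem~\ref{Thm:15}: since $C_{\ref{Thm:16}}(\omega)$ depends on $\omega$ only through $a_{\min}(\omega)^{-1}$, Assumption~A1 guarantees $a_{\min}^{-1} \in L^p(\Omega)$ for every $p \in [1,\infty)$, so $C_{\ref{Thm:16}} \in L^p(\Omega)$ by Minkowski's (or H\"older's) inequality, whence $u_h^\ast \in L^p(\Omega, L^2(D))$. I do not anticipate any genuine obstacle here: the entire difficulty of the continuous case has already been resolved in Theorem~\ref{Thm:15}, and the only point needing verification is that $S_{\omega,h}$ inherits the same $a_{\min}(\omega)$-dependent stability as $S_\omega$. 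This is exactly the content of Theorem~\ref{Thm:7}, and crucially the hidden constant does not degrade as $h \to 0$, so the bound \eqref{Eqn:33} holds uniformly in $h$.
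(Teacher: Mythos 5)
Your proposal is correct and is essentially the paper's own argument: the paper proves Theorem~\ref{Thm:16} by repeating the proof of Theorem~\ref{Thm:15} with the ``obvious modifications'' you spell out, namely replacing $S_\omega$ by $S_{\omega,h}$ and invoking the discrete stability bound of Theorem~\ref{Thm:7} (whose hidden constant is indeed independent of $h$), then concluding $u_h^\ast \in L^p(\Omega,L^2(D))$ from Assumption~A1 and H\"older's inequality exactly as you do.
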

\begin{proof}
The proof is very similar to that of Theorem~\ref{Thm:15} with few obvious modifications. 
\end{proof}

\begin{remark}
$u_h^\ast \in L^\infty(\Omega, L^2(D))$ can be proved analogously to Remark~\ref{rem:more}.
\end{remark}

We can now show that the random field $u_h^\ast$ converges to $u^\ast$ in $L^p(\Omega, L^2(D))$ as the discretization parameter $h$ tends to zero and we derive the corresponding error estimate.
\begin{theorem}
\label{Thm:3}
Let Assumptions A1-A3 hold for some $0<t \leq 1$ and let $u^\ast , u^\ast_h \in L^p(\Omega, L^2(D))$ be the random fields defined in \eqref{Eqn:6} and \eqref{Eqn:7}, respectively. Then
\begin{equation}
\label{Eqn:29}
\|u^\ast(\omega,\cdot)- u^\ast_h(\omega,\cdot)\|_{L^2(D)} \lesssim C_{\ref{Thm:3}}(\omega) h^{2 s},
\end{equation}
\label{Eqn:19}
for almost all $\omega \in \Omega$ and for all $0< s <t$ except $s=\tfrac{1}{2}$, where
\begin{equation*}
C_{\ref{Thm:3}}(\omega):=\frac{1}{\alpha} \sqrt{\alpha \|u^\ast(\omega,\cdot)\|^2_{L^2(D)} +  \max(1,a^{-1}_{\min}(\omega))^2 \big( \|u^\ast(\omega,\cdot)\|_{L^2(D)}+\|z\|_{L^2(D)} \big)^2 }\times C_{\ref{Thm:2}}(\omega).
\end{equation*}
Moreover,
\begin{equation}
\label{Eqn:20}
\|u^\ast- u^\ast_h\|_{L^p(\Omega,L^2(D))} \leq C(\alpha,z,{U_{ad}}) h^{2 s}, \quad \mbox{ for all } p \in [1,\infty),
\end{equation}
with $C(\alpha,z,{U_{ad}})>0$ independent of $\omega$ and $h$ and depending only on the deterministic data $\alpha,z$ and $U_{ad}$. If $t=1$, the above two estimates hold with $s=1$. 
\end{theorem}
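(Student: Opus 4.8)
The plan is to derive the pathwise estimate \eqref{Eqn:29} directly from Theorem~\ref{Thm:4} combined with the $L^2$ finite element bound of Theorem~\ref{Thm:2}, and then to lift the result to the $L^p(\Omega)$ estimate \eqref{Eqn:20} by taking $L^p(\Omega)$ norms and invoking H\"older's inequality. Throughout I identify the optimal controls with the random fields, $u_\omega = u^\ast(\omega,\cdot)$ and $u_{\omega,h} = u^\ast_h(\omega,\cdot)$, and write $y_\omega = S_\omega u_\omega$ for the associated optimal state and $C_{\ref{Thm:2}}(\omega)$ for the constant of Theorem~\ref{Thm:2}.

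First I would fix $\omega$ and apply Theorem~\ref{Thm:4}. Since the state term on the left is nonnegative, discarding it yields
\[
\frac{\alpha}{2}\|u^\ast(\omega,\cdot) - u^\ast_h(\omega,\cdot)\|^2_{L^2(D)}
\le
\frac12 \|S_\omega u_\omega - S_{\omega,h} u_\omega\|^2_{L^2(D)}
+ \frac{1}{2\alpha}\|S_\omega(y_\omega - z) - S_{\omega,h}(y_\omega - z)\|^2_{L^2(D)}.
\]
Both terms on the right are $L^2$ state-approximation errors, so I apply the bound \eqref{Eqn:16} twice: once with right-hand side $u_\omega$, and once with right-hand side $y_\omega - z \in L^2(D)$ (note that $p_\omega = S_\omega(y_\omega-z)$ is exactly the adjoint state). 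This produces a common factor $C_{\ref{Thm:2}}(\omega)^2 h^{4s}$ multiplying $\|u^\ast(\omega,\cdot)\|^2_{L^2(D)}$ and $\|y_\omega - z\|^2_{L^2(D)}$, respectively.

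The remaining pathwise step is to bound $\|y_\omega - z\|_{L^2(D)}$. Using $\|y_\omega - z\|_{L^2(D)} \le \|y_\omega\|_{L^2(D)} + \|z\|_{L^2(D)}$, the Poincar\'e--Friedrichs inequality on $H^1_0(D)$, and the $H^1$-bound \eqref{Eqn:12}, I obtain $\|y_\omega\|_{L^2(D)} \lesssim a_{\min}(\omega)^{-1}\|u^\ast(\omega,\cdot)\|_{L^2(D)}$, and hence $\|y_\omega - z\|_{L^2(D)} \lesssim \max(1, a_{\min}(\omega)^{-1})\big(\|u^\ast(\omega,\cdot)\|_{L^2(D)} + \|z\|_{L^2(D)}\big)$. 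Substituting this into the inequality above, multiplying by $2/\alpha$, and taking square roots collects precisely the factor defining $C_{\ref{Thm:3}}(\omega)$, which establishes \eqref{Eqn:29}.

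Finally, for \eqref{Eqn:20} I would take the $L^p(\Omega)$ norm of the pathwise estimate, so that everything reduces to proving $C_{\ref{Thm:3}} \in L^p(\Omega)$ for every $p \in [1,\infty)$. The constant $C_{\ref{Thm:3}}(\omega)$ is a product of $C_{\ref{Thm:2}}(\omega)$ (built from $a_{\max}(\omega)$, $a_{\min}(\omega)^{-1}$ and $\|a_\omega\|_{C^t(\bar D)}$), the factor $\max(1, a_{\min}(\omega)^{-1})$, and $\|u^\ast(\omega,\cdot)\|_{L^2(D)}$; by Assumptions~A1--A2 each coefficient factor lies in $L^q(\Omega)$ for all $q$, and by Theorem~\ref{Thm:15} so does $\|u^\ast(\omega,\cdot)\|_{L^2(D)}$. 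A repeated application of the generalized H\"older inequality with suitably chosen conjugate exponents then shows the product has all finite moments, giving a constant independent of $\omega$ and $h$. I expect this moment bookkeeping to be the only genuine obstacle: the pathwise estimate is an essentially immediate consequence of Theorems~\ref{Thm:4} and~\ref{Thm:2}, whereas verifying $C_{\ref{Thm:3}} \in L^p(\Omega)$ requires carefully tracking the integrability of the several $\omega$-dependent factors and choosing the H\"older exponents so that the all-moments properties from A1, A2 and Theorem~\ref{Thm:15} can be combined.
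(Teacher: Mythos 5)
Your proposal is correct and follows essentially the same route as the paper's own proof: apply Theorem~\ref{Thm:4}, bound both right-hand terms via Theorem~\ref{Thm:2} (the second after estimating $\|y_\omega - z\|_{L^2(D)}$ through the triangle inequality, Poincar\'e's inequality and \eqref{Eqn:12}, yielding the $\max(1,a_{\min}^{-1}(\omega))$ factor), and then lift the pathwise bound to $L^p(\Omega)$ by H\"older's inequality using Assumptions A1--A2 and the moment bound \eqref{Eqn:32} from Theorem~\ref{Thm:15}. The only cosmetic differences are that you make explicit what the paper leaves implicit (discarding the nonnegative state term, the use of Poincar\'e--Friedrichs, and the H\"older exponent bookkeeping for the product of random factors).
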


\begin{proof}
We start the proof by recalling  that for a given $\omega\in \Omega$ the realizations $u^\ast(\omega,\cdot), u^\ast_h(\omega,\cdot)$ are, by definition, the solutions of $(\mathrm{P}_{\omega})$ and $(\mathrm{P}_{\omega,h})$, respectively. Hence, utilizing   Theorem~\ref{Thm:4} we obtain 
\begin{align}
\label{Eqn:8}
\frac{\alpha}{2} \|u^\ast(\omega,\cdot)-u^\ast_h(\omega,\cdot)\|^2_{L^2(D)}  \leq \frac{1}{2} \|S_{\omega}u_\omega-S_{\omega, h}u_\omega\|^2_{L^2(D)} \nonumber\\
\quad + \frac{1}{2\alpha} \|S_{\omega}(y_\omega-z)-S_{\omega,h}(y_\omega-z)\|^2_{L^2(D)},
\end{align} 
where we define here and subsequently $u_\omega:=u^\ast(\omega,\cdot)$ and $y_\omega:=S_\omega u_\omega$.

We start by estimating the first term in  \eqref{Eqn:8}. To achieve this,  we see that from Theorem~\ref{Thm:2} we have
\begin{align*}
\|S_{\omega}u_\omega-S_{\omega, h}u_\omega\|_{L^2(D)} \lesssim C_{\ref{Thm:2}}(\omega) \|u_\omega\|_{L^2(D)} h^{2 s}. 
\end{align*}
The second term can be estimated as follows:
\begin{align*}
\|S_{\omega}(y_\omega-z)-S_{\omega,h}(y_\omega-z)\|_{L^2(D)} &\lesssim C_{\ref{Thm:2}}(\omega) \|y_\omega-z\|_{L^2(D)} h^{2 s} \quad \mbox{(from Theorem~\ref{Thm:2})}\\
&\lesssim C_{\ref{Thm:2}}(\omega) \big( \|y_\omega\|_{L^2(D)}+\|z\|_{L^2(D)} \big) h^{2 s} \\
&\lesssim C_{\ref{Thm:2}}(\omega) \big(a^{-1}_{\min}(\omega) \|u_\omega\|_{L^2(D)}+\|z\|_{L^2(D)} \big) h^{2 s} \quad \mbox{(using \eqref{Eqn:12})}\\
&\lesssim C_{\ref{Thm:2}}(\omega) \max(1,a^{-1}_{\min}(\omega)) \big( \|u_\omega\|_{L^2(D)}+\|z\|_{L^2(D)} \big) h^{2 s}. 
\end{align*}
Inserting the above estimates into \eqref{Eqn:8} gives the estimate \eqref{Eqn:29} from which one obtains \eqref{Eqn:20} after using the H\"{o}lder inequality together with Assumptions A1-A2 as well as recalling  \eqref{Eqn:32}. This completes the proof.  
\end{proof}

\section{Monte Carlo FE Methods}\label{sec:MC}
\label{sec:MC FE methods}
In this section we study the approximation of the expected value $\mathbb{E}[u^\ast]$ of the random field $u^\ast$ defined by \eqref{Eqn:6} via multilevel Monte Carlo methods. We start first by reviewing the classical Monte Carlo method, but before that we make the following assumptions and notation. 

Let $\{\mathcal{T}_{h_l}\}^L_{l=0}$ be a sequence of triangulations of $D$ such that $\mathcal{T}_{h_l}$ is obtained from $\mathcal{T}_{h_{l-1}}$ via uniform refinement with $h_{l}=\tfrac{1}{2}h_{l-1}=2^{-l}h_0$ for $l=1, \ldots, L$, where $h_l$ denotes the maximum mesh size of $\mathcal{T}_{h_l}$ and $h_0$ is the mesh size of an initial coarse triangulation $\mathcal{T}_{h_0}$. For any triangulation $\mathcal{T}_{h_l}$ we assume that
\[
\bar D= \bigcup_{T \in \mathcal{T}_{h_l}} \bar T, \quad h_l:=\max_{T \in \mathcal{T}_{h_l}}\mbox{diam}(T), \quad l=0, \ldots, L.
\]
Furthermore, on each $\mathcal{T}_{h_l}$  we construct the space of linear finite elements $X_{h_l}$ defined by  
\[
X_{h_l}:= \{ v \in C^0(\bar D) : v \mbox{ is a linear polynomial on each }  T \in \mathcal{T}_{h_l} \mbox{ and } {v}_{ | \partial D}=0   \}.
\]
Denoting by $N_l$ the inner nodes in the triangulation $\mathcal{T}_{h_l}$, we have $\dim(X_{h_l})=N_l$, where $\dim(X_{h_l})$ denotes the dimension of the space $X_{h_l}$. It is clear that for the spaces $X_{h_l}$ constructed this way there holds
\[
X_{h_0} \subset X_{h_1}  \subset \cdots \subset X_{h_L}.
\]

Finally, adopting the convention $h_l\approx N^{-\frac{1}{d}}_l$ with $d$ being the dimension of the computational domain $D$,  we make the following assumption on the computational cost of solving the problem $(\mathrm{P}_{\omega,h_l})$.
\begin{description}
\item[A4.]
For a given $\omega \in \Omega$ and mesh size $h_l$, the solution of $(\mathrm{P}_{\omega,h_l})$ satisfying the estimate \eqref{Eqn:29} can be computed with a computational cost $\mathcal{C}_l$ which is asymptotically, as $l \to \infty$, bounded by 
\begin{align*}
\mathcal{C}_l \lesssim h_l^{-\gamma} \approx N^{\frac{\gamma}{d}}_l
\end{align*}
with some real number $\gamma >0$, where $N_l:=\dim(X_{h_l})$ and $d$ is the dimension of the computational domain $D$.
\end{description}
It is worth to mention that the ideal value of $\gamma$ in the previous assumption would be $\gamma=d$, in this case for instance, doubling the number of unknowns $N_{l}$ should result in doubling the computational cost $\mathcal{C}_l$.

\subsection{The classical Monte Carlo method}
Let $u^\ast \in L^2(\Omega,L^2(D))$ be the random field defined by \eqref{Eqn:6}. The classical Monte Carlo estimator to $\mathbb{E}[u^\ast]$ is the sample average $E_M[u^\ast]$ defined by 
\begin{equation}
\label{Eqn:5}
E_M[u^\ast]:= \frac{1}{M}\sum_{i=1}^M u^\ast_{\omega_i},
\end{equation}
where $u^\ast_{\omega_i}:=u^\ast(\omega_i,\cdot), i=1,\ldots, M$ are $M$ independent identically distributed samples of $u^\ast$. Notice that, for a fixed $M$, the estimator \eqref{Eqn:5} can be interpreted as a $L^2(D)$-valued random variable. The next result gives the \emph{statistical error}  associated with the estimator \eqref{Eqn:5}.

\begin{theorem}
\label{Thm:5}
Let $u \in L^2(\Omega,L^2(D))$. Then, for any $M \in \mathbb{N}$, we have
\begin{equation}
\|\mathbb{E}[u]-E_M[u]\|_{L^2(\Omega,L^2(D))} \leq M^{-\frac{1}{2}} \|u\|_{L^2(\Omega,L^2(D))}. 
\end{equation}
\end{theorem}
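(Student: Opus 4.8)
The plan is to exploit the i.i.d.\ structure of the samples together with the fact that the $L^2(D)$-norm is induced by an inner product, so that the squared error of the sample average reduces to a variance that decays like $1/M$. First I would set $\mu:=\mathbb{E}[u]\in L^2(D)$ and note that, since $E_M[u]$ depends on the $M$ independent draws $\omega_1,\dots,\omega_M$, the outer norm $\|\cdot\|_{L^2(\Omega,L^2(D))}$ is to be read over the product probability space carrying these draws (a standard abuse of notation). Writing
\[
\mathbb{E}[u]-E_M[u]=\frac{1}{M}\sum_{i=1}^M\big(\mu-u_{\omega_i}\big),
\]
I introduce the i.i.d.\ $L^2(D)$-valued random variables $X_i:=\mu-u_{\omega_i}$, each with $\mathbb{E}[X_i]=0$.

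Next I would expand the squared error using the $L^2(D)$ inner product and linearity of expectation,
\[
\|\mathbb{E}[u]-E_M[u]\|_{L^2(\Omega,L^2(D))}^2 = \frac{1}{M^2}\sum_{i,j=1}^M \mathbb{E}\big[(X_i,X_j)_{L^2(D)}\big].
\]
For the off-diagonal terms $i\neq j$, the independence of $X_i$ and $X_j$ together with $\mathbb{E}[X_i]=\mathbb{E}[X_j]=0$ yields $\mathbb{E}[(X_i,X_j)_{L^2(D)}]=(\mathbb{E}[X_i],\mathbb{E}[X_j])_{L^2(D)}=0$, while each of the $M$ diagonal terms equals $\mathbb{E}[\|u-\mu\|_{L^2(D)}^2]$ by identical distribution. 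Hence the double sum collapses to
\[
\|\mathbb{E}[u]-E_M[u]\|_{L^2(\Omega,L^2(D))}^2=\frac{1}{M}\,\mathbb{E}\big[\|u-\mathbb{E}[u]\|_{L^2(D)}^2\big].
\]

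Finally I would bound the variance by the second moment via the bias--variance identity $\mathbb{E}[\|u-\mu\|_{L^2(D)}^2]=\mathbb{E}[\|u\|_{L^2(D)}^2]-\|\mu\|_{L^2(D)}^2\leq\|u\|_{L^2(\Omega,L^2(D))}^2$, and take square roots to conclude. The one step that deserves care, rather than the routine algebra, is the vanishing of the cross terms: this is the Hilbert-space-valued analogue of ``independent and mean zero implies uncorrelated,'' namely that for independent Bochner-integrable $X_i,X_j$ one may interchange the expectation with the bilinear inner product to obtain $\mathbb{E}[(X_i,X_j)_{L^2(D)}]=(\mathbb{E}[X_i],\mathbb{E}[X_j])_{L^2(D)}$. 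I expect to justify this by a Fubini argument on the product space (or equivalently by approximating $u$ with simple $L^2(D)$-valued functions), and it is the only place where the inner-product structure of $L^2(D)$ and the independence of the draws are genuinely used.
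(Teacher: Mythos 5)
Your proposal is correct and follows essentially the same route as the paper: write $\mathbb{E}[u]-E_M[u]$ as $\tfrac{1}{M}\sum_{i=1}^M(\mathbb{E}[u]-u_{\omega_i})$, use independence and zero mean to reduce the second moment of the sum to $M$ diagonal terms, apply the bias--variance identity $\mathbb{E}\big[\|u-\mathbb{E}[u]\|^2_{L^2(D)}\big]=\mathbb{E}\big[\|u\|^2_{L^2(D)}\big]-\|\mathbb{E}[u]\|^2_{L^2(D)}$, and take square roots. The only difference is presentational: the paper asserts the collapse $\mathbb{E}\big[\|\sum_i X_i\|^2_{L^2(D)}\big]=\sum_i\mathbb{E}\big[\|X_i\|^2_{L^2(D)}\big]$ directly from the i.i.d.\ property, whereas you make explicit the underlying Hilbert-space cross-term cancellation $\mathbb{E}\big[(X_i,X_j)_{L^2(D)}\big]=(\mathbb{E}[X_i],\mathbb{E}[X_j])_{L^2(D)}=0$ via Fubini, which is a legitimate and slightly more careful justification of the same step.
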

\begin{proof}
Using  the fact that $u_{\omega_i}:=u(\omega_i, \cdot), i=1,\ldots,M $ are independent, identically distributed random samples, we obtain
\begin{align*}
\mathbb{E}\big[\|\mathbb{E}[u]-E_M[u]\|^2_{L^2(D)} \big] &=\mathbb{E}\Big[\Big\|\mathbb{E}[u]-\frac{1}{M}\sum_{i=1}^M  u_{\omega_i} \Big\|^2_{L^2(D)} \Big] \\
&=\frac{1}{M^2} \mathbb{E}\Big[\Big\|\sum_{i=1}^M \big(\mathbb{E}[u]- u_{\omega_i}\big) \Big\|^2_{L^2(D)} \Big] \\
&=\frac{1}{M^2} \sum_{i=1}^M \mathbb{E}\Big[\Big\| \mathbb{E}[u]- u_{\omega_i} \Big\|^2_{L^2(D)} \Big] \\
&=\frac{1}{M}  \mathbb{E}\big[\big\| \mathbb{E}[u]- u \big\|^2_{L^2(D)} \big] \\
&=\frac{1}{M} \Big( \mathbb{E}\big[\| u \|^2_{L^2(D)} \big] - \| \mathbb{E}[u] \|^2_{L^2(D)} \Big)\\
& \leq \frac{1}{M}  \mathbb{E}\big[\| u \|^2_{L^2(D)} \big]. 
\end{align*}
Taking the square root of both sides of the previous inequality gives the desired result. 
\end{proof}

It might be difficult in practice to obtain samples from  the random field $u^\ast$ since  it is not known most of the time. 
To overcome this problem we consider sampling from its finite element approximation $u^\ast_{h_L}$ at a given level $L\in \mathbb{N}$.
We use the classical finite element  Monte Carlo estimator to $\mathbb{E}[u^\ast]$ defined by
\begin{equation}
\label{Eqn:9}
E_M[u^\ast_{h_L}]:= \frac{1}{M}\sum_{i=1}^M u^\ast_{\omega_i,h_L},
\end{equation}
where $u^\ast_{\omega_i,h_L}:=u^\ast_{h_L}(\omega_i,\cdot), i=1,\ldots, M$ are $M$ independent identically distributed samples of $u^\ast_{h_L}$. The next theorem states the error estimate associated with \eqref{Eqn:9}.
 
\begin{theorem}
\label{Thm:6}
Let Assumptions A1-A3 hold for some $0<t \leq 1$. Then 
\begin{equation*}
\|\mathbb{E}[u^\ast]-E_M[ u^\ast_{h_L}]\|_{L^2(\Omega,L^2(D))} \leq C(\alpha,z,{U_{ad}}) ( M^{-\frac{1}{2}} + h^{2s}_L )
\end{equation*}
for all $0< s <t$ except $s=\tfrac{1}{2}$ where $C(\alpha,z,{U_{ad}}) >0$ is a constant independent of $h_L$ and depending only on the data $\alpha,z$ and on $U_{ad}$. If $t=1$, the above  estimate hold with $s=1$.
\end{theorem}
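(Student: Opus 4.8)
The plan is to decompose the total error into a deterministic \emph{bias} (finite element discretization) error and a \emph{statistical} (sampling) error, and then to bound each piece separately using results already established. Concretely, I would insert the intermediate quantity $\mathbb{E}[u^\ast_{h_L}]$ and apply the triangle inequality in $L^2(\Omega,L^2(D))$:
\[
\|\mathbb{E}[u^\ast]-E_M[u^\ast_{h_L}]\|_{L^2(\Omega,L^2(D))} \leq \|\mathbb{E}[u^\ast]-\mathbb{E}[u^\ast_{h_L}]\|_{L^2(\Omega,L^2(D))} + \|\mathbb{E}[u^\ast_{h_L}]-E_M[u^\ast_{h_L}]\|_{L^2(\Omega,L^2(D))}.
\]

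For the first (bias) term, I would observe that both $\mathbb{E}[u^\ast]$ and $\mathbb{E}[u^\ast_{h_L}]$ are deterministic elements of $L^2(D)$, so the outer $L^2(\Omega,\cdot)$ norm collapses to the $L^2(D)$ norm. Then, using that the Bochner expectation contracts the norm (Jensen's inequality) followed by H\"older's inequality on the probability space,
\[
\|\mathbb{E}[u^\ast-u^\ast_{h_L}]\|_{L^2(D)} \leq \mathbb{E}\big[\|u^\ast-u^\ast_{h_L}\|_{L^2(D)}\big] \leq \|u^\ast-u^\ast_{h_L}\|_{L^2(\Omega,L^2(D))},
\]
and the right-hand side is bounded by $C(\alpha,z,U_{ad})\,h_L^{2s}$ directly from the estimate \eqref{Eqn:20} in Theorem~\ref{Thm:3}.

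For the second (statistical) term, I would apply Theorem~\ref{Thm:5} with the choice $u:=u^\ast_{h_L}$, which yields
\[
\|\mathbb{E}[u^\ast_{h_L}]-E_M[u^\ast_{h_L}]\|_{L^2(\Omega,L^2(D))} \leq M^{-\frac{1}{2}}\,\|u^\ast_{h_L}\|_{L^2(\Omega,L^2(D))}.
\]
The point I expect to require the most care is ensuring that the prefactor $\|u^\ast_{h_L}\|_{L^2(\Omega,L^2(D))}$ is bounded \emph{uniformly} in the refinement level $L$ (equivalently in $h_L$); otherwise the claimed $h_L$-independence of the constant would fail. This is precisely what Theorem~\ref{Thm:16} supplies: the bound \eqref{Eqn:33} furnishes an $h$-independent pathwise bound $C_{\ref{Thm:16}}(\omega)$ whose $L^2(\Omega)$-norm is finite by Assumption~A1, so that $\|u^\ast_{h_L}\|_{L^2(\Omega,L^2(D))} \leq C(\alpha,z,U_{ad})$ with a constant that does not depend on $L$.

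Finally, I would combine the two bounds, taking the larger of the two constants, to arrive at the asserted estimate $C(\alpha,z,U_{ad})(M^{-1/2}+h_L^{2s})$. The case $t=1$ with $s=1$ follows verbatim, since both Theorem~\ref{Thm:3} and Theorem~\ref{Thm:16} remain valid in that regime. Apart from the uniform-in-$L$ moment bound noted above, every step is a routine application of the triangle inequality, Jensen's inequality, and the cited theorems, so I anticipate no further obstacles.
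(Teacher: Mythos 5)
Your proposal is correct and follows essentially the same route as the paper's proof: the same triangle-inequality splitting into bias and statistical error, the bias term handled via Jensen/H\"older and the estimate \eqref{Eqn:20} of Theorem~\ref{Thm:3}, and the statistical term via Theorem~\ref{Thm:5} combined with the $h$-independent moment bound \eqref{Eqn:33} from Theorem~\ref{Thm:16}. Your explicit emphasis on the uniformity in $L$ of the prefactor $\|u^\ast_{h_L}\|_{L^2(\Omega,L^2(D))}$ is exactly the point the paper's proof relies on, so there is nothing to add.
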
 
\begin{proof}
We start the proof by using the triangle inequality to obtain
\begin{equation*}
\|\mathbb{E}[ u^\ast]-E_M[ u^\ast_{h_L}]\|_{L^2(\Omega,L^2(D))} \leq \|\mathbb{E}[u^\ast]-\mathbb{E} [u^\ast_{h_L}]\|_{L^2(\Omega,L^2(D))} + \|\mathbb{E}[ u^\ast_{h_L}]-E_M[u^\ast_{h_L}]\|_{L^2(\Omega,L^2(D))}.
\end{equation*}
The task  is now  to estimate the two terms on the right hand side of the previous inequality. The  estimate for the first term follows from Theorem~\ref{Thm:3} after utilizing the Cauchy-Schwarz inequality. In fact, we have
\begin{align*}
\|\mathbb{E}[ u^\ast]-\mathbb{E} [u^\ast_{h_L}]\|_{L^2(\Omega,L^2(D))} &=\|\mathbb{E}[u^\ast-u^\ast_{h_L}]\|_{L^2(\Omega,L^2(D))} =\|\mathbb{E}[u^\ast-u^\ast_{h_L}]\|_{L^2(D)} \\
&\leq \mathbb{E} [ \|u^\ast-u^\ast_{h_L}\|_{L^2(D)} ] \leq \| u^\ast-u^\ast_{h_L}\|_{L^2(\Omega,L^2(D))} \\
&\leq C(\alpha,z,{U_{ad}})  h^{2s}_L.
\end{align*}
For the second term, we use Theorem~\ref{Thm:5} together with the bound \eqref{Eqn:33} to obtain 
\begin{align*}
\|\mathbb{E}[u^\ast_{h_L}]-E_M[u^\ast_{h_L}]\|_{L^2(\Omega,L^2(D))} &\leq M^{-\frac{1}{2}} \|u^\ast_{h_L}\|_{L^2(\Omega,L^2(D))} \\
& \leq C(\alpha,z,{U_{ad}}) M^{-\frac{1}{2}}.
\end{align*}
Combining the estimates of the two terms gives the desired result. 
\end{proof}

The previous theorem tells us that the total error resulting from using \eqref{Eqn:9} as an approximation to $\mathbb{E}[u^\ast]$ can be decomposed into two parts; a statistical part which is of order $M^{-1/2}$ and a discretization part of order $h^{2s}_L$. This suggests that the number of samples $M$ should be related to the mesh size $h_L$ in order to achieve a certain overall error. We state this more rigorously in the next theorem and we give the total computational cost of using \eqref{Eqn:9}.

\begin{theorem}
Let Assumptions A1-A4 hold for some $0<t \leq 1$. Then, the MC estimator \eqref{Eqn:9} with the following choice of number of samples
\begin{align*}
M = O({h}^{-4s}_L), 
\end{align*}
yields the error bound 
\begin{align} 
\label{Eqn:30}
\|\mathbb{E}[ u^\ast]-E_M[ u^\ast_{h_L}]\|_{L^2(\Omega,L^2(D))} \leq C(\alpha,z,{U_{ad}}) h^{2s}_L 
\end{align}
for all $0< s <t$ except $s=\tfrac{1}{2}$, with a total computational cost $\mathcal{C}_L$ which is asymptotically, as $L \to \infty$, bounded by 
\begin{align}
\label{Eqn:31}
\mathcal{C}_L \lesssim h^{-\gamma-4s}_L,
\end{align}
for some $C(\alpha,z,{U_{ad}})>0$ depending on the data $\alpha,z$ and on $U_{ad}$. If $t=1$, the above  estimates \eqref{Eqn:30} and \eqref{Eqn:31} hold with $s=1$. 
\end{theorem}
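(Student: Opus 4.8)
The plan is to read this statement as a direct corollary of the error estimate in Theorem~\ref{Thm:6} together with the cost model in Assumption~A4; the only real content is balancing the two error contributions through the choice of $M$ and then multiplying through by the per-sample cost. First I would recall the decomposition from Theorem~\ref{Thm:6},
\[
\|\mathbb{E}[u^\ast]-E_M[u^\ast_{h_L}]\|_{L^2(\Omega,L^2(D))} \leq C(\alpha,z,U_{ad})\,(M^{-\frac{1}{2}}+h_L^{2s}),
\]
valid for all $0<s<t$ with $s\neq\tfrac12$. The statistical error decays like $M^{-1/2}$ while the discretization (bias) error $h_L^{2s}$ is fixed once the level $L$ is chosen, so there is nothing to gain by driving $M^{-1/2}$ below $h_L^{2s}$. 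Equating the orders of the two terms, $M^{-1/2}\simeq h_L^{2s}$, gives the prescribed choice $M=O(h_L^{-4s})$.

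Next I would substitute this choice back into the bound. With $M=O(h_L^{-4s})$ one has $M^{-1/2}\lesssim h_L^{2s}$, so both terms in the estimate above are $O(h_L^{2s})$ and the sum is bounded by $2\,C(\alpha,z,U_{ad})\,h_L^{2s}$, which is precisely \eqref{Eqn:30} (after absorbing the factor $2$ into the generic constant, still depending only on the deterministic data $\alpha,z,U_{ad}$). For the complexity bound I would use that the total cost of the estimator \eqref{Eqn:9} is the number of samples times the cost of computing a single sample: each realization $u^\ast_{\omega_i,h_L}$ requires solving $(\mathrm{P}_{\omega_i,h_L})$, which by Assumption~A4 costs at most $O(h_L^{-\gamma})$. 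Hence
\[
\mathcal{C}_L \lesssim M\cdot h_L^{-\gamma} \lesssim h_L^{-4s}\cdot h_L^{-\gamma} = h_L^{-\gamma-4s},
\]
which is \eqref{Eqn:31}. The case $t=1$ follows verbatim with $s=1$, since Theorem~\ref{Thm:6} and Assumption~A4 both hold in that endpoint case.

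I do not expect a genuine obstacle here, since the substantive work (the $O(h_L^{2s})$ bias bound and the sampling error bound) has already been carried out in Theorems~\ref{Thm:3}, \ref{Thm:5} and \ref{Thm:6}. The only points requiring a little care are bookkeeping ones: confirming that the generic constant $C(\alpha,z,U_{ad})$ stays independent of both $h_L$ and $M$ throughout, and justifying that the total cost is additive over the $M$ independent samples so that the product $M\cdot h_L^{-\gamma}$ is the correct complexity. It is also worth emphasising, as a remark accompanying the proof, that the choice $M\simeq h_L^{-4s}$ is \emph{optimal} for this single-level estimator precisely because it balances rather than over-resolves the statistical and discretization errors; this observation motivates the multilevel construction in \S\ref{sec:MLMC}, whose aim is to reduce the cost exponent $\gamma+4s$.
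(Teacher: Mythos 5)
Your proposal is correct and follows essentially the same route as the paper's own proof: invoke Theorem~\ref{Thm:6}, balance the statistical and discretization errors via $M \approx h_L^{-4s}$, and multiply the per-sample cost from Assumption~A4 by the number of samples to get \eqref{Eqn:31}. Your additional remarks on constant tracking, cost additivity, and the optimality of the balancing choice are sound but not needed beyond what the paper records.
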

\begin{proof}
The estimate \eqref{Eqn:30} follows from Theorem~\ref{Thm:6} after choosing $M  \approx {h}^{-4s}_L$. To obtain the bound \eqref{Eqn:31}, it is sufficient to multiply the computational cost of one sample from Assumption A4 by the total number of samples $M \approx {h}^{-4s}_L$.
\end{proof}

\subsection{The multilevel Monte Carlo method}\label{sec:MLMC}

We start by observing that the random field $u^\ast_{h_L}$ can be written as
\begin{equation*}
u^\ast_{h_L}=  \sum^L_{l=0} (u^\ast_{h_l}-u^\ast_{h_{l-1}}),
\end{equation*}
where $u^\ast_{h_{-1}}:=0$. The linearity of the expectation operator $\mathbb{E}[\cdot]$ implies
\begin{equation}
\label{Eqn:21}
\mathbb{E}[u^\ast_{h_L}]=  \sum^L_{l=0} \mathbb{E}[u^\ast_{h_l}-u^\ast_{h_{l-1}}].
\end{equation}
In the multilevel Monte Carlo method, we approximate $\mathbb{E}[u^\ast_{h_l}-u^\ast_{h_{l-1}}]$ in \eqref{Eqn:21} by the classical Monte Carlo estimator \eqref{Eqn:9} with a number of samples $M_l$ that depends on the mesh level $l$. Therefore, the MLMC estimator to $\mathbb{E}[u^\ast]$ reads
\begin{equation}
\label{Eqn:10}
E^L[u^\ast_{h_L}]:=  \sum^L_{l=0} E_{M_l}[u^\ast_{h_l}-u^\ast_{h_{l-1}}],
\end{equation}
where the samples over all levels are independent of each other. The next theorem gives  the error estimate associated with the estimator \eqref{Eqn:10}. 
\begin{theorem}
\label{Thm:13} 
Let Assumptions A1-A3 hold for some $0<t \leq 1$. Then
\begin{align}
\label{Eqn:23} 
\|\mathbb{E}[ u^\ast]-E^L[ u^\ast_{h_L}]\|_{L^2(\Omega,L^2(D))} \leq C(\alpha,z,{U_{ad}})  \Big( h^{2s}_L +  \sum^L_{l=0}  M^{-\frac{1}{2}}_l h^{2s}_l \Big),
\end{align}
for all $0< s <t$ except $s=\tfrac{1}{2}$, where $C(\alpha,z,{U_{ad}})>0$ depends on the data $\alpha,z$ and on $U_{ad}$. If $t=1$, the above  estimate holds with $s=1$.
\end{theorem}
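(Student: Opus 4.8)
The plan is to follow the same two-step strategy used in the proof of Theorem~\ref{Thm:6}: split the total error into a bias (finite element discretization) contribution and a statistical (sampling) contribution via the triangle inequality, then estimate each piece separately. First I would write
\[
\|\mathbb{E}[u^\ast]-E^L[u^\ast_{h_L}]\|_{L^2(\Omega,L^2(D))}
\leq
\|\mathbb{E}[u^\ast]-\mathbb{E}[u^\ast_{h_L}]\|_{L^2(\Omega,L^2(D))}
+
\|\mathbb{E}[u^\ast_{h_L}]-E^L[u^\ast_{h_L}]\|_{L^2(\Omega,L^2(D))}.
\]
The first term is exactly the deterministic bias already controlled in the proof of Theorem~\ref{Thm:6}: using Jensen/Cauchy--Schwarz and then the $L^p(\Omega,L^2(D))$ estimate \eqref{Eqn:20} from Theorem~\ref{Thm:3} gives a bound of order $h_L^{2s}$, which accounts for the first summand on the right-hand side of \eqref{Eqn:23}.

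The heart of the argument is the second term, the statistical error of the multilevel estimator. Here I would use the telescoping identity \eqref{Eqn:21} together with the definition \eqref{Eqn:10} to write
\[
\mathbb{E}[u^\ast_{h_L}]-E^L[u^\ast_{h_L}]
=
\sum_{l=0}^L \Big(\mathbb{E}[u^\ast_{h_l}-u^\ast_{h_{l-1}}]-E_{M_l}[u^\ast_{h_l}-u^\ast_{h_{l-1}}]\Big).
\]
Because the samples across different levels are independent, the cross terms vanish in the expected squared $L^2(D)$-norm, so the variance of the sum is the sum of the per-level variances. This is the key orthogonality step and I expect it to be the main technical obstacle: one must justify carefully that the $L^2(D)$-valued level corrections on distinct levels are uncorrelated and that the $L^2(\Omega)$-norm of the total error therefore decomposes additively (this is the Hilbert-space analogue of $\var[\sum X_l]=\sum\var[X_l]$ for independent summands). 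Granting this, I would apply Theorem~\ref{Thm:5} to each level-$l$ correction $Y_l:=u^\ast_{h_l}-u^\ast_{h_{l-1}}$, obtaining
\[
\|\mathbb{E}[Y_l]-E_{M_l}[Y_l]\|_{L^2(\Omega,L^2(D))}
\leq
M_l^{-\frac12}\,\|Y_l\|_{L^2(\Omega,L^2(D))}.
\]

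It remains to bound $\|Y_l\|_{L^2(\Omega,L^2(D))}=\|u^\ast_{h_l}-u^\ast_{h_{l-1}}\|_{L^2(\Omega,L^2(D))}$ by $h_l^{2s}$. I would insert $\pm u^\ast$ and use the triangle inequality,
\[
\|u^\ast_{h_l}-u^\ast_{h_{l-1}}\|_{L^2(\Omega,L^2(D))}
\leq
\|u^\ast_{h_l}-u^\ast\|_{L^2(\Omega,L^2(D))}
+
\|u^\ast-u^\ast_{h_{l-1}}\|_{L^2(\Omega,L^2(D))},
\]
and then apply the estimate \eqref{Eqn:20} from Theorem~\ref{Thm:3} to each term; since $h_{l-1}=2h_l$, both contributions are $\lesssim h_l^{2s}$ with a constant depending only on $\alpha$, $z$ and $U_{ad}$. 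Combining the square-root-of-sum-of-variances bound on the statistical term with the per-level estimate then yields the sum $\sum_{l=0}^L M_l^{-1/2} h_l^{2s}$, and adding the bias term completes \eqref{Eqn:23}. Finally I would note that the exceptional case $s=\tfrac12$ is excluded exactly as in Theorem~\ref{Thm:3}, and the statements with $s=1$ follow when $t=1$.
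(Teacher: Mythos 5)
Your proposal is correct, and it follows the paper's proof almost step for step: the same bias/statistical splitting by the triangle inequality, the bias term handled exactly as in Theorem~\ref{Thm:6} via \eqref{Eqn:20}, and the per-level terms bounded by inserting $\pm u^\ast$, applying Theorem~\ref{Thm:3}, and using $h_{l-1}=2h_l$. The single point of divergence is how you pass from the total statistical error to the per-level errors. The paper does this with nothing more than the triangle inequality applied to the telescoping sum \eqref{Eqn:21}--\eqref{Eqn:10}, i.e. $\|\mathbb{E}[u^\ast_{h_L}]-E^L[u^\ast_{h_L}]\|_{L^2(\Omega,L^2(D))}\leq \sum_{l=0}^L \|\mathbb{E}[Y_l]-E_{M_l}[Y_l]\|_{L^2(\Omega,L^2(D))}$ with $Y_l:=u^\ast_{h_l}-u^\ast_{h_{l-1}}$, with no appeal to independence across levels at all. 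You instead invoke the independence of the level samples to argue that the level corrections are uncorrelated in $L^2(\Omega,L^2(D))$, so that the squared error decomposes as the sum of per-level variances (Pythagoras in the Hilbert space), after which you implicitly relax the resulting $\ell^2$-sum to the $\ell^1$-sum, since $\big(\sum_l a_l^2\big)^{1/2}\leq \sum_l a_l$ for $a_l\geq 0$, to arrive at \eqref{Eqn:23}. Your route is valid --- for independent, mean-zero $L^2(D)$-valued random variables the cross expectations $\mathbb{E}\big[\int_D X_lX_k\,dx\big]=\int_D\mathbb{E}[X_l]\mathbb{E}[X_k]\,dx$ vanish by Fubini --- but the step you flag as ``the main technical obstacle'' is in fact unnecessary: sub-additivity of the norm already suffices for the claimed bound. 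What your orthogonality argument buys is a sharper intermediate estimate (the root-mean-square of the level errors rather than their sum), which is the form one would want when tuning the $M_l$ in a complexity analysis; what the paper's argument buys is brevity and freedom from any measurability or independence justification for Hilbert-space-valued variables.
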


\begin{proof}
Throughout the proof, we use the notation $\|\cdot\|_V:=\|\cdot\|_{L^2(\Omega,L^2(D))}$. We start by observing that using the triangle inequality together with \eqref{Eqn:21}, \eqref{Eqn:10} gives
\begin{align}
\label{Eqn:22}
\|\mathbb{E}[ u^\ast]-E^L[ u^\ast_{h_L}]\|_V &\leq \|\mathbb{E}[ u^\ast]-\mathbb{E} [u^\ast_{h_L}]\|_V + \|\mathbb{E}[ u^\ast_{h_L}]-E^L[ u^\ast_{h_L}]\|_V \nonumber \\
&\leq I + II, 
\end{align}
where we define
\begin{equation*}
I:= \|\mathbb{E}[ u^\ast]-\mathbb{E} [u^\ast_{h_L}]\|_V \quad \mbox{ and } \quad II:=\sum^L_{l=0} \|\mathbb{E}[ u^\ast_{h_l}-u^\ast_{h_{l-1}}]-E_{M_l}[ u^\ast_{h_l}-u^\ast_{h_{l-1}}]\|_V.
\end{equation*}
The aim is now to estimate the terms $I$ and $II$. We start by estimating $I$. To this end,  it is enough to argue like in the proof of Theorem~\ref{Thm:6} to obtain
\begin{align*}
\|\mathbb{E}[ u^\ast]-\mathbb{E} [u^\ast_{h_L}]\|_V \leq C(\alpha,z,{U_{ad}}) h^{2s}_L.
\end{align*}
To estimate the term $II$, we utilize Theorem~\ref{Thm:5}, the triangle inequality, Theorem~\ref{Thm:3} and the fact that $h_{l-1}=2 h_l$ to get
\begin{align*}
\sum^L_{l=0} \|\mathbb{E}[ u^\ast_{h_l}-u^\ast_{h_{l-1}}]-E_{M_l}[ u^\ast_{h_l}-u^\ast_{h_{l-1}}]\|_V  &\leq  \sum^L_{l=0}  M^{-\frac{1}{2}}_l  \|u^\ast_{h_l}-u^\ast_{h_{l-1}}\|_V  \\
& \leq  \sum^L_{l=0}  M^{-\frac{1}{2}}_l  \Big(\| u^\ast_{h_l}-u^\ast\|_V + \|u^\ast-u^\ast_{h_{l-1}}\|_V \Big) \\
& \leq C(\alpha,z,{U_{ad}}) \sum^L_{l=0}  M^{-\frac{1}{2}}_l ( h^{2s}_l +  h^{2s}_{l-1}) \\
& = C(\alpha,z,{U_{ad}})\sum^L_{l=0}  M^{-\frac{1}{2}}_l ( 1 +  2^{2s})h^{2s}_l.
\end{align*}
Hence, combining the estimates of the terms $I$, $II$  and inserting them in \eqref{Eqn:22} gives
\begin{align*}
\|\mathbb{E}[ u^\ast]-E^L[ u^\ast_{h_L}]\|_V \leq  C(\alpha,z,{U_{ad}}) \Big( h^{2s}_L + \sum^L_{l=0}  M^{-\frac{1}{2}}_l h^{2s}_l \Big),
\end{align*}
which is the desired result and the proof is complete. 
\end{proof}

The previous theorem holds for any choice of $\{M_l\}^L_{l=0}$ in \eqref{Eqn:10}, where $M_l$ is the  number of samples over the refinement level $l$.  However, it is desirable that  $\{M_l\}^L_{l=0}$ is chosen in such a way that the statistical error and the discretization error in \eqref{Eqn:23} are balanced.  The next theorem suggests a choice of $\{M_l\}^L_{l=0}$ such that the  overall error in  \eqref{Eqn:23} is of order $h^{2s}_L$ and it gives the associated computational cost.

\begin{theorem}
\label{Thm:14} 
Let Assumptions A1-A4 hold for some $0<t \leq 1$. Then, the  MLMC estimator \eqref{Eqn:10} with the following choice of $\{M_l\}^L_{l=0}$ where
\begin{align*}
M_l = 
\begin{cases}
O(h_L^{-4s} h_l^{\frac{\gamma+4s}{2}}),   & 4s>\gamma \\
O\big( (L+1)^2 h_L^{-4s} h_l^{\frac{\gamma+4s}{2}} \big),   & 4s=\gamma \\
O( h_L^{-\frac{\gamma+4s}{2}} h_l^{\frac{\gamma+4s}{2}}),   & 4s<\gamma
\end{cases}
\end{align*}
yields the error bound
\begin{align}
\label{Eqn:24} 
\|\mathbb{E}[ u^\ast]-E^L[ u^\ast_{h_L}]\|_{L^2(\Omega,L^2(D))} \leq C(\alpha,z,{U_{ad}}) h^{2s}_L 
\end{align}
for all $0< s <t$ except $s=\tfrac{1}{2}$, with a total computational cost $\mathcal{C}_L$ which is asymptotically, as $L \to \infty$, bounded by 
\begin{align}
\label{Eqn:25}
\mathcal{C}_L \lesssim 
\begin{cases}
h_L^{-4s},   & 4s>\gamma \\
(L+1)^3 h_L^{-4s},   & 4s=\gamma \\
h_L^{-\gamma},   & 4s<\gamma.
\end{cases}
\end{align}
Here,  $C(\alpha,z,{U_{ad}}) >0$ depends on the data $\alpha,z$ and on $U_{ad}$. If $t=1$, the above  estimates \eqref{Eqn:24} and \eqref{Eqn:25} hold with $s=1$. 
\end{theorem}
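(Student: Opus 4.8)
The plan is to feed the prescribed sample numbers $\{M_l\}_{l=0}^L$ into the error bound of Theorem~\ref{Thm:13} to obtain \eqref{Eqn:24}, and then to sum the per-sample costs from Assumption~A4 to obtain \eqref{Eqn:25}. Both halves reduce to one elementary fact about geometric sums. Since $h_l = 2^{-l}h_0$, for any exponent $\beta \in \mathbb{R}$ we have
\[
\sum_{l=0}^L h_l^{\beta} = h_0^{\beta} \sum_{l=0}^L 2^{-l\beta},
\]
and the sum $\sum_{l=0}^L 2^{-l\beta}$ is bounded by a constant independent of $L$ when $\beta>0$, equals $L+1$ when $\beta=0$, and is bounded by a constant multiple of $2^{-L\beta}\approx h_L^{\beta}$ when $\beta<0$. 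The three regimes $4s>\gamma$, $4s=\gamma$, $4s<\gamma$ in the statement correspond exactly to the sign of the relevant $\beta$.

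First I would establish \eqref{Eqn:24}. By Theorem~\ref{Thm:13} the bias term is already $O(h_L^{2s})$, so it suffices to show the statistical part $\sum_{l=0}^L M_l^{-1/2} h_l^{2s}$ is also $O(h_L^{2s})$. Inserting the prescribed $M_l$ and simplifying exponents, one finds that $M_l^{-1/2}h_l^{2s}$ is proportional to $h_L^{2s}\,h_l^{(4s-\gamma)/4}$ (with an extra factor $(L+1)^{-1}$ in the critical case). Thus the statistical error reduces to $h_L^{2s}\sum_{l=0}^L h_l^{(4s-\gamma)/4}$, and the geometric-sum fact with $\beta=(4s-\gamma)/4$ gives $O(h_L^{2s})$ in all three regimes: the summable case $\beta>0$; the critical case $\beta=0$, where the factor $(L+1)^2$ in $M_l$ produces $(L+1)^{-1}$ after taking $M_l^{-1/2}$, exactly cancelling the factor $L+1$ coming from the $L+1$ terms in the sum; and the case $\beta<0$, where $\sum_l h_l^{\beta}\lesssim h_L^{\beta}$ combines with the different normalization of $M_l$ to again yield $h_L^{2s}$.

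Next I would bound the total cost. Summing over levels and using Assumption~A4 (the level-$l$ correction requires solving on the meshes $h_l$ and $h_{l-1}=2h_l$, both of cost $\lesssim h_l^{-\gamma}$), we get $\mathcal{C}_L \lesssim \sum_{l=0}^L M_l\, h_l^{-\gamma}$. A short exponent computation shows $M_l h_l^{-\gamma}$ is proportional to $h_L^{-4s}h_l^{(4s-\gamma)/2}$ when $4s\geq\gamma$ and to $h_L^{-(\gamma+4s)/2}h_l^{(4s-\gamma)/2}$ when $4s<\gamma$. Applying the geometric-sum fact now with exponent $\beta'=(4s-\gamma)/2$ yields the three bounds in \eqref{Eqn:25}: a bounded sum gives $h_L^{-4s}$ when $4s>\gamma$; the critical exponent contributes the logarithmic penalty $(L+1)^3$ (two powers of $L+1$ from the $M_l$ normalization, one from the sum); and when $4s<\gamma$ the estimate $\sum_l h_l^{(4s-\gamma)/2}\lesssim h_L^{(4s-\gamma)/2}$ combines with the prefactor $h_L^{-(\gamma+4s)/2}$ to give $h_L^{-\gamma}$.

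The steps above are essentially bookkeeping; the one place demanding care is the critical regime $4s=\gamma$, where the naive balancing of statistical and discretization error degenerates because the geometric sum collapses to $L+1$. There I would track every factor of $L+1$ explicitly to verify that inflating $M_l$ by precisely $(L+1)^2$ is what restores the $O(h_L^{2s})$ error while producing only the mild penalty $(L+1)^3$ in the cost, and then dispatch the remaining two regimes by the uniform geometric-series argument. The limiting case $t=1$, $s=1$ needs no separate treatment, since Theorem~\ref{Thm:13} already supplies the corresponding error bound.
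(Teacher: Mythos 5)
Your proof is correct and follows essentially the same route as the paper: insert the prescribed $\{M_l\}_{l=0}^L$ into the bound of Theorem~\ref{Thm:13}, reduce both the statistical error and the cost $\sum_{l} M_l\,\mathcal{C}_l$ to geometric sums in $h_l^{\beta}$, and split according to the sign of $\beta=(4s-\gamma)/4$ (resp. $(4s-\gamma)/2$). The only difference is one of completeness in your favor: the paper writes out only the regime $4s>\gamma$ and declares the other two analogous, whereas you explicitly track the $(L+1)$ factors in the critical case $4s=\gamma$ and the modified normalization of $M_l$ when $4s<\gamma$.
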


\begin{proof}
We give the proof only for the case $4s>\gamma$; the other two cases $4s=\gamma$ and $4s<\gamma$ can be treated analogously. To verify the estimate \eqref{Eqn:24} it is enough to utilize  Theorem~\ref{Thm:13} together with the choice
\begin{align}
\label{Eqn:26}
M_l  \approx h_L^{-4s} h_l^{\frac{\gamma+4s}{2}}, \quad l=0,\ldots,L,
\end{align}
and the approximation  $h_l \approx 2^{-l}$ to obtain 
\begin{align*}
\|\mathbb{E}[ u^\ast]-E^L[ u^\ast_{h_L}]\|_{L^2(\Omega,L^2(D))} &\leq C(\alpha,z,{U_{ad}}) \Big( h^{2s}_L +  \sum^L_{l=0}  M^{-\frac{1}{2}}_l h^{2s}_l \Big)\\
& = C(\alpha,z,{U_{ad}}) \Big( h^{2s}_L + h^{2s}_L \sum^L_{l=0}  h_l^{\frac{4s-\gamma}{4}} \Big)\\
& = C(\alpha,z,{U_{ad}}) \Big( h^{2s}_L + h^{2s}_L \sum^L_{l=0}  2^{-(\frac{4s-\gamma}{4}) l} \Big)\\
& = C(\alpha,z,{U_{ad}}) h^{2s}_L   \Bigg( 1 +  \dfrac{2^{-(\frac{4s-\gamma}{4})(L+1)}-1}{2^{-\frac{4s-\gamma}{4}}-1}  \Bigg) \\
&  \leq C(\alpha,z,{U_{ad}}) h^{2s}_L.
\end{align*}

It remains to verify the asymptotic upper bound for the total computational cost \eqref{Eqn:25}. To achieve this, we see that from Assumption A4 together with the choice \eqref{Eqn:26} and $h_l \approx 2^{-l}$, we have
\begin{align*}
\mathcal{C}_L=\sum^L_{l=0} M_l \mathcal{C}_l & \lesssim \sum^L_{l=0} h_L^{-4s} h_l^{\frac{\gamma+4s}{2}} h_l^{-\gamma} = h_L^{-4s} \sum^L_{l=0}  h_l^{\frac{4s-\gamma}{2}} \approx h_L^{-4s} \sum^L_{l=0}  2^{-(\frac{4s-\gamma}{2})l}  \\
& \approx  h_L^{-4s} \frac{2^{-(\frac{4s-\gamma}{2})(L+1)}-1}{2^{-(\frac{4s-\gamma}{2})}-1} \lesssim h_L^{-4s}, \qquad \mbox{ as }L \to \infty,
\end{align*}   
which is the desired result.
\end{proof}

Importantly, by comparing the total cost of MC in \eqref{Eqn:31} and MLMC in \eqref{Eqn:25} we see that the  multilevel estimator achieves the same accuracy as classical Monte Carlo at a fraction of the cost.

We remark that the hidden constant in $O(\cdot)$ in the sequence $\{M_l\}^L_{l=0}$ from Theorem~\ref{Thm:14} plays a crucial rule in determining the size of the statistical error. 
This can be seen in \eqref{Eqn:23} where it is clear that the larger the value of the constant in $O(\cdot)$, the smaller the statistical error. 
In order to obtain a minimal choice of $\{M_l\}^L_{l=0}$ we adapt the strategy presented in \cite[Remark~4.11]{KSW:2014}, that is, $\{M_l\}^L_{l=0}$ is chosen to be the solution of the following minimization problem
\begin{equation}
\label{Eqn:36}
(\mathrm{PN}) \quad
\begin{array}{l}
 \min_{\bar M \in \mathcal{M}_{ad}} \mathcal{J}(\bar M):=\sum^L_{l=0} M_l \mathcal{C}_l \\
\end{array}
\end{equation} 
where 
\begin{align*}
\mathcal{M}_{ad}:=\big\{(M_1,\ldots,M_L) \in \mathbb{N}^L: M_l \geq 1 \mbox{ for }l=0,\ldots,L \mbox{ and } \\
 \sum^L_{l=0}  M^{-\frac{1}{2}}_l h^{2s}_l \leq c_0 h^{2s}_L,  \mbox{ for fixed } c_0>0 \big\}.
\end{align*}
The problem $(\mathrm{PN})$ is a convex minimization problem. Moreover, for a fixed $c_0>0$, the set $\mathcal{M}_{ad}$ is non-empty since $\{M_l\}^L_{l=0}$ from Theorem~\ref{Thm:14} belongs to $\mathcal{M}_{ad}$ if the hidden constant in $O(\cdot)$ is large enough. It should be clear that the solution   $\{M^\ast_l\}^L_{l=0}$ of $(\mathrm{PN})$ is also satisfying \eqref{Eqn:26} since $\mathcal{J}(\{M^\ast_l\}^L_{l=0}) \leq \mathcal{J}(\{M_l\}^L_{l=0})=:\mathcal{C}_L$. 

\begin{remark}
Observe that the admissible set of controls $U_{ad}$ is a \textit{convex} set.
However, it is clear that the MLMC estimate for $\mathbb{E}[u^\ast]$ in \eqref{Eqn:10} is in general not admissible since the corrections  in \eqref{Eqn:10} are computed using different realizations of the random coefficient.
In contrast, the classical MC estimate in \eqref{Eqn:5} is always admissible since it is a convex combination of admissible controls.
This has already been observed in the context of random obstacle problems \cite{BierigChernov:2014,KSW:2014}.
\end{remark}

\section{Numerical Example}\label{sec:example}
In this section we verify numerically the assertion of Theorem~\ref{Thm:14}, namely, the order of convergence \eqref{Eqn:24} and the upper bound \eqref{Eqn:25} for the computational cost. 
For this purpose, we consider the optimal control problem
\begin{align}
\label{Eqn:34} 
\min_{u \in U_{ad}} J_\omega(u)=\frac{1}{2} \Vert y_\omega-z \Vert_{L^2(D)}^2  + \dfrac{\alpha}{2} \Vert u \Vert_{L^2(D)} ^2 
\end{align}
subject to
\begin{equation*}
 \begin{aligned}
-\nabla \cdot ( a(\omega,x) \nabla y(\omega,x) )  &=u(x)  \quad \mbox{ in }  D, \\
y(\omega, x) &=0  \quad \mbox{ on } \partial D,
\end{aligned}
\end{equation*}
where we define $D:=(-0.5,0.5)\times(-0.5,0.5) \subset \mathbb{R}^2$ and $ U_{ad}:=L^2(D)$.
The data is chosen as follows: 
\begin{align} 
\alpha &=10^{-2}, \nonumber\\
z(x) &= \sin(2 \pi x_1) \cos(\pi x_2),\nonumber\\
a(\omega,x)&=e^{\kappa(\omega,x)}, \label{Eqn:35} 
\end{align}
with the random field $\kappa$ defined by
\begin{align*}
{\kappa}(x,\omega):=&0.84\cos(0.42\pi x_1)\cos(0.42\pi x_2) Y_1(\omega) +0.45\cos(0.42\pi x_1) \sin(1.17\pi x_2) Y_2(\omega)\\  
+& 0.45\sin(1.17\pi x_1)\cos(0.42\pi x_2)Y_3(\omega)+0.25 \sin(1.17\pi x_1)\sin(1.17\pi x_2) Y_4(\omega),
\end{align*}
where $Y_i \sim N(0,1),i=1,\ldots,4,$ are independent normally distributed random variables. 
In fact, the random field $\kappa$  approximates a \textit{Gaussian} random field with zero mean and covariance function $C(x,\widetilde x)=e^{-\|x-\widetilde x\|_1}, x,\widetilde x \in D$, where $\|\cdot\|_1$ denotes the $l_1$-norm in $\mathbb{R}^2$. 
The terms in $\color{blue}\kappa$ are the four leading terms in the associated Karhunen-Lo\`eve expansion,
see \cite{ghanem2003stochastic} for more details.
As a consequence, the random field $a$ in \eqref{Eqn:35} is a (truncated) lognormal field.

Assumptions A1--A2 are satisfied for all $t<1/2$ for any lognormal random field $a$ where $\log(a)$ has a Lipschitz continuous, isotropic covariance function and a mean function in $C^t(\overline{D})$, see \cite[Proposition~2.4]{charrier2013finite}.
The property $1/a_{\min} \in L^p(\Omega)$ for all $p\in [1,\infty)$ is proved in \cite[Proposition~2.3]{Charrier:2012}.
In our example the covariance function of $\kappa$ is in fact analytic in $\overline{D}\times\overline{D}$.
This gives realizations of $\kappa=\log(a)$ (and thus $a$) which belong to $C^1(\overline D)$ almost surely.
Hence Assumption A2 is satisfied for $t= 1$.

For a given realization of the coefficient $a(\omega,x)$, the problem in \eqref{Eqn:34} is discretized by means of the variational discretization as described in \S\ref{sec:FE}.
The resulting discrete optimality system is solved numerically by a semi-smooth Newton method, see for instance \cite{hinze2012globalized}. 
All the computations are done using  a Matlab implementation running on  3.40 GHz 4$\times$Intel Core i5-3570 processor with 7.8 GByte of RAM. 
For solving the linear system in each iteration of the semi-smooth Newton method, we use the Matlab backslash operation.
Note that the linear system is a $3 \times 3$ block system of order $O(N_l)$ with sparse blocks.
Hence the backslash costs about $O(N_l^{1.5}) = O(h_l^{-1.5 d})$ operations in $d$-dimensional space.
A numerical study in \cite{hinze2012globalized} reveals that the number $n$ of semi-smooth Newton iterations is independent of the mesh size $h_l$; in fact it depends on the input data, the tolerance, and the initial guess.
In our context this means that $n$ depends on the realization $a_\omega$ of the diffusion coefficient, on the parameter $\alpha$ and on the desired state $z$ in the cost functional in \eqref{Eqn:34}.
Since $\alpha$ and $z$ are in general supplied by the user we do not consider variations of these parameters here.
For our numerical example we found that $n$ does not vary significantly across the realizations $a_\omega$, see \S\ref{sec:cost}. 
In summary, assuming that the number of semi-smooth Newton iterations is independent of the realizations $a_\omega$ and the level $l$, the cost to obtain one sample of the optimal control is $O(h_l^{-1.5d})$.
Hence Assumption A4 is satisfied with $\gamma=1.5\times d$.
We mention that it is possible to achieve the ideal value $\gamma=d$ by using a multigrid based method (see e.g. \cite{GongXieYan:2015}).

\subsection{FE convergence rate and computational cost}\label{sec:cost}
Observe that Theorem~\ref{Thm:14} requires the values of $\gamma$ and $s$ a priori.
These can be estimated easily via numerical computations as illustrated in Figure~\ref{Fig:1}. 
The value of $\gamma$ for our solver can be deduced from Figure~\ref{Fig:1a} where we plot of the average cost (CPU-time in seconds) for computing $u_{h_l}$, the solution of \eqref{Eqn:34}, for a given realization $a(\omega,x)$ and mesh size $h_l$ versus the number of degrees of freedom $N_l$ in the mesh when  $h_l=2^{-l}$  for $l=0,\ldots,8$. 
We see in the figure that the asymptotic behavior of the average cost is $O(N_l^{1.2})$ and thus $\gamma \approx 2.4$.
This is slightly better than $\gamma=1.5\times d=3$ which we expect in 2D space.
Here, the average cost at a given $N_l$ is considered to be the average of the total CPU-time in seconds required to solve \eqref{Eqn:34} for 500 independent realizations of the coefficient $a(\omega,x)$ at the given mesh size $h_l$. 
To confirm that the cost per sample does not vary significantly across the realizations $a_\omega$ we plot the CPU-time in seconds with respect to $N_l$ for individual realizations of $a_\omega$ in Figure~\ref{Fig:1c}.

The value of $s$ can be obtained from Figure~\ref{Fig:1b} where we plot $ E_{500} [ \|u_{\omega,h^\ast}-u_{\omega,h_l}\|_{L^2(D)} ]$ versus $h_l^{-1}=2^l$ for $l=0,\ldots,7$. 
Here, $E_{500} [\cdot]$ denotes the sample average of $500$ independent samples. 
Furthermore, $u_{\omega,h_l}$ is the solution of \eqref{Eqn:34} at a given mesh size $h_h$ and realization of $a(\omega,x)$.
The control $u_{\omega,h^\ast}$ with $h^\ast:=2^{-8}$ is a reference solution since the exact solution of \eqref{Eqn:34} is not known. 
We see clearly from the plot that the asymptotic behavior of the error is  $O(h_l^{2})$ as $h_l \to 0$, and thus $s=1$. 
In fact, this quadratic order of convergence should be expected since the realizations  of \eqref{Eqn:35} belong to $C^t(\overline D)$ with $t=1$ and according to Theorem~\ref{Thm:3} we have $s=1$ if $t=1$. 
Furthermore,  we observe that the  error enters the asymptotic regime  when the mesh size is $h_2=2^{-2}$ or smaller. 
This suggests that in the MLMC estimator \eqref{Eqn:21} 
one should choose the mesh size $h_0$ of the coarsest level to be $h_0=2^{-2}$.
 Finally, for  all the experiments  used in Figure~\ref{Fig:1}, the triangulation of the domain $D$ for $l=0$ consists of four triangles with only one degree of freedom located at the origin. 

\subsection{Multilevel Monte Carlo simulation}
Having estimated the values of $\gamma$ and $s$, we are in a position to verify the error estimate \eqref{Eqn:24} and the upper bound for the computational cost in \eqref{Eqn:25} for the MLMC estimator $E^L[u^\ast_{h_L}]$, where $u^\ast_{h_L}$ is the random field associated to \eqref{Eqn:34} as defined in \eqref{Eqn:7}. 
To this end, let $\{\mathcal{T}_{h_l}\}^L_{l=0}$, for $L=1,\ldots,5$, be sequences of triangulations of the domain $D$ as described at the beginning of \S\ref{sec:MC}. 
Here, we choose the mesh size $h_0$ of the initial coarse triangulation  $\mathcal{T}_{h_0}$ to be $h_0=2^{-2}$ (see the previous paragraph for the reason of this choice). 
Since the expected value $\mathbb{E}[u^\ast]$ is not known explicitly, we consider the MLMC estimator $E^{L^\ast}[u^\ast_{h_{L^\ast}}]$ to be the reference expected value where $L^\ast=6$ and $h_{L^\ast}=2^{-8}$. 

It is clear that the asymptotic behavior of the error $\mathbb{E}[u^\ast]-E^L[u^\ast_{h_L}]$ in the $L^2(\Omega,L^2(D))$-norm and in the $L^2(D)$-norm is the same. 
To simplify the computations we thus calculate the error in the $L^2(D)$-norm.  
Finally, for any given value of $L$, we obtain the sequence $\{M_l\}^L_{l=0}$ of number of samples per refinement level through solving \eqref{Eqn:36} with the choice $c_0=\tfrac{1}{2}$ by the \texttt{fmincon} function from the  Matlab Optimization Toolbox.
We round non-integer  values in the sequence  using the ceiling function. 
In Table~\ref{table:1}, we report the sequences $\{M_l\}^L_{l=0}$, for $L=1,\ldots,5$, used in computing $E^L[u^\ast_{h_L}]$, where $M_l$ is the number of samples for a refinement level with mesh size $h_l=2^{-(2+l)}$. 

Figure~\ref{Fig:2a} presents the plot of the CPU-time (in seconds) for the computation of $E^L[u^\ast_{h_L}]$ vs. the number of degrees of freedom $N_L$ in triangulations with mesh size $h_L=2^{-(2+L)}$, for $L=1,\ldots,5$. 
It is clear from the figure that the computational cost is asymptotically bounded by $O(N^2_L)$ as $L \to \infty$.
Since $N_L = O(h_L^{-2})$, this confirms the theoretical cost bound in \eqref{Eqn:25} in the case $4s > \gamma$ (recall that $s=1$ and $\gamma \approx 2.4$ in problem \eqref{Eqn:34}).
In fact, the theoretical cost bound is sharp in this case.

Note that we did not verify the cost bound for the MC estimator in \eqref{Eqn:31} due to limited computational time.
In our example the MC estimator requires $O(h_L^{-2.4})$ more operations on level $L$ than the MLMC estimator to achieve the same accuracy.

In addition we report the error associated with $E^L[u^\ast_{h_L}]$ in Figure~\ref{Fig:2b}.  
We can see clearly that the best fitting curve for the error behaves like $O(h^2_L)$ as $L \to \infty$.
This is predicted by \eqref{Eqn:24}.


\begin{table}[ht]
\centering
\caption{The sequences $\{M_l\}^L_{l=0}$, for $L=1,\ldots,5$, used in computing $E^L[u^\ast_{h_L}]$, where $M_l$ is the number of samples over a refinement level with mesh size $h_l=2^{-(2+l)}$.}
\label{table:1}
\begin{tabular}{ l | l  l  l l l l}
\hline
$L$ &	  $M_0$&       $M_1$&   $M_2$&  $M_3$& $M_4$ & $M_5$  \\
\hline

1 &	  183      &     24      &	 	        &            &             &        \\
2 &	  4815     &     631     &      83     &            &             &        \\
3 &	  102258   &     13387   &      1753   &      230   &             &        \\
4 &	  1948277  &     255053  &      33390  &      4372  &       573   &        \\
5 &	  34878076 &     4565950 &      597737 &      78251 &       10244 &   1342 \\

\hline 
\end{tabular}
\end{table}

\begin{figure}[p]
        \centering
        \begin{subfigure}[h!]{0.8\textwidth}
                \includegraphics[trim = 33mm 80mm 40mm 70mm, clip, width=\textwidth]{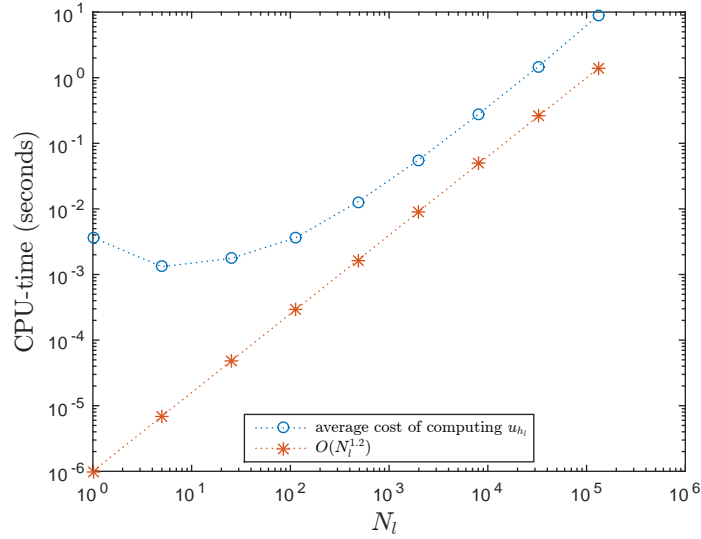}
                \caption{The average cost (CPU-time in seconds) of computing $u_{h_l}$ the solution of \eqref{Eqn:34} for a given realization of $a(\omega,x)$ vs. the number of degrees of freedom $N_l$ when $h_l=2^{-l}$ for $l=0,\ldots,8$. } 
         \label{Fig:1a}       
        \end{subfigure}%
        
        \begin{subfigure}[h!]{0.8\textwidth}
                \includegraphics[trim = 38mm 80mm 40mm 70mm, clip, width=\textwidth]{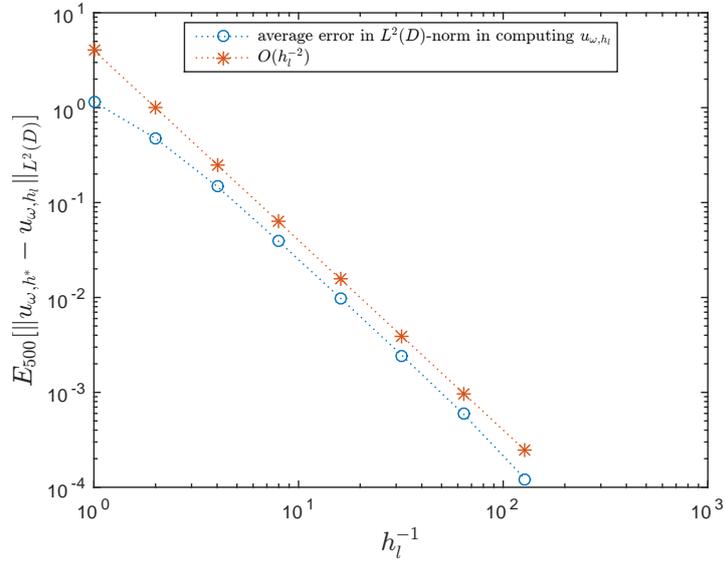}
                \caption{The average error $E_{500}[ \|u_{\omega,h^\ast}-u_{\omega,h_l}\|_{L^2(D)} ]$ vs. $h_l^{-1}=2^l$ for $l=0,\ldots,7$, where $u_{\omega,h_l}$ is the solution of \eqref{Eqn:34} for a given realization $a(\omega,x)$ and $u_{\omega,h^\ast}$ the reference solution with $h^\ast=2^{-8}$.}
         \label{Fig:1b}
        \end{subfigure}

        \caption{The computations of $s$ and $\gamma$ for  the estimate \eqref{Eqn:29} and  Assumption~A4, respectively.}
        \label{Fig:1}
\end{figure}

\begin{figure}[p]
        \centering
        \begin{subfigure}[h!]{0.8\textwidth}
                \includegraphics[trim = 33mm 80mm 40mm 70mm, clip, width=\textwidth]{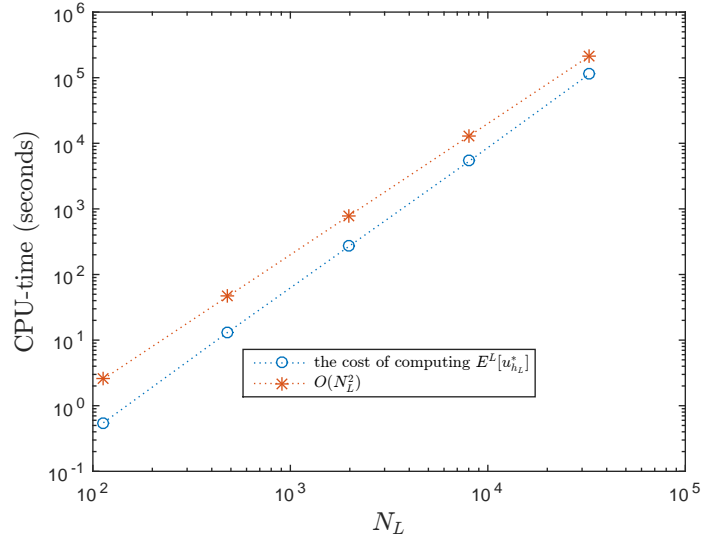}
                \caption{The cost (CPU-time in seconds) of computing $E^L[u^\ast_{h_L}]$ vs. the number of degrees of freedom $N_L$ when $h_L=2^{-(L+2)}$ for $L=1,\ldots,5$. } 
         \label{Fig:2a}       
        \end{subfigure}%
        
        \begin{subfigure}[h!]{0.8\textwidth}
                \includegraphics[trim = 36mm 80mm 40mm 70mm, clip, width=\textwidth]{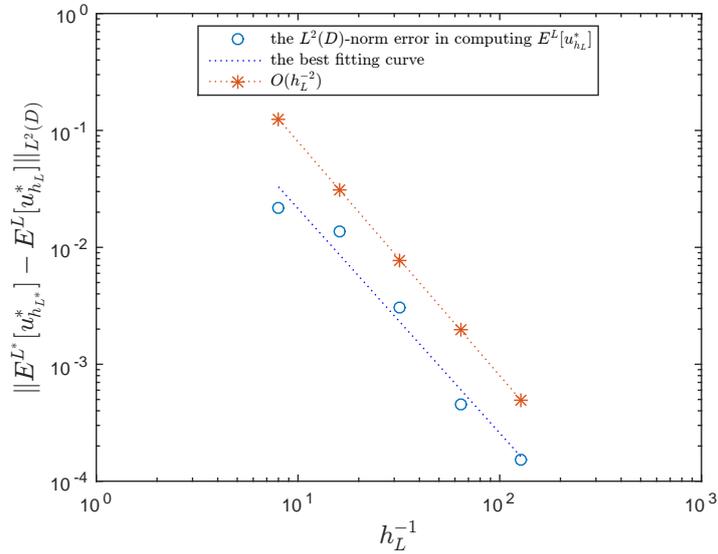}
                \caption{The error $\|E^{L^\ast}[u^\ast_{h_{L^\ast}}]-E^{L}[u^\ast_{h_{L}}]\|_{L^2(D)} $ vs. $h_L^{-1}=2^{(L+2)}$ for $L=1,\ldots,5$, where  $E^{L^\ast}[u^\ast_{h_{L^\ast}}]$ with $L^\ast=6$ is the reference expected value.}
         \label{Fig:2b}
        \end{subfigure}

        \caption{The error order of convergence and the computational cost upper bound for the MLMC estimator $E^{L}[u^\ast_{h_L}]$, where $u^\ast_{h_L}$ is the random field associated to \eqref{Eqn:34} as defined in \eqref{Eqn:7}.}
        \label{Fig:2}
\end{figure}

\begin{figure}[p]
        \centering
                \includegraphics[trim = 33mm 80mm 40mm 70mm, clip, width=\textwidth]{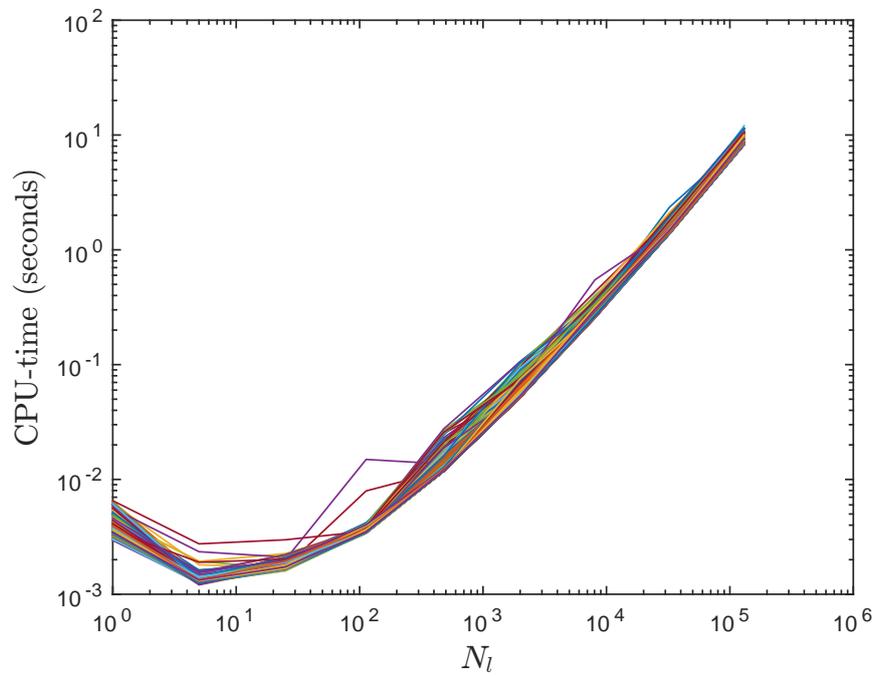}
                \caption{The cost (CPU-time in seconds) of computing $u_{h_l}$ the solution of \eqref{Eqn:34} for a given realization of $a(\omega,x)$ vs. the number of degrees of freedom $N_l$ when $h_l=2^{-l}$ with $l=0,\ldots,8$ for 500 independent realizations.} 
         \label{Fig:1c}      
\end{figure}

%
%

\bibliographystyle{plain}

\begin{thebibliography}{10}

\bibitem{aubin2009set}
Jean-Pierre Aubin and H{\'e}l{\`e}ne Frankowska.
\newblock {\em Set-valued analysis}.
\newblock Springer Science+Business Media, 2009.

\bibitem{BNT:2010}
Ivo Babu{\v{s}}ka, Fabio Nobile, and Ra{\'u}l Tempone.
\newblock A stochastic collocation method for elliptic partial differential
  equations with random input data.
\newblock {\em SIAM Rev.}, 52(2):317--355, 2010.

\bibitem{BSZ:2011}
Andrea Barth, Christoph Schwab, and Nathaniel Zollinger.
\newblock Multi-level {M}onte {C}arlo finite element method for elliptic {PDE}s
  with stochastic coefficients.
\newblock {\em Numer. Math.}, 119(1):123--161, 2011.

\bibitem{BennerOnwutaStoll:2015a}
Peter Benner, Akwum Onwuta, and Martin Stoll.
\newblock {Block-diagonal preconditioning for optimal control problems
  constrained by PDEs with uncertain inputs}.
\newblock Technical Report MPIMD/15-05, Max Planck Institute Magdeburg, April
  2015.
\newblock Accepted for SIAM J. Matrix Anal. Appl.

\bibitem{BennerOnwutaStoll:2015b}
Peter Benner, Akwum Onwuta, and Martin Stoll.
\newblock {Low-rank solvers for unsteady Stokes-Brinkman optimal control
  problem with random data}.
\newblock Technical Report MPIMD/15-10, Max Planck Institute Magdeburg, Juli
  2015.
\newblock Accepted for Comput. Methods Appl. Mech. Engrg.

\bibitem{BierigChernov:2014}
Claudio Bierig and Alexey Chernov.
\newblock Convergence analysis of multilevel {M}onte {C}arlo variance
  estimators and application for random obstacle problems.
\newblock {\em Numer. Math.}, 130(4):579--613, 2015.

\bibitem{Borzi:2010}
A.~Borz{\`{\i}}.
\newblock Multigrid and sparse-grid schemes for elliptic control problems with
  random coefficients.
\newblock {\em Comput. Vis. Sci.}, 13(4):153--160, 2010.

\bibitem{Borzietal:2010}
A.~Borz{\`{\i}}, V.~Schulz, C.~Schillings, and G.~von Winckel.
\newblock On the treatment of distributed uncertainties in {PDE}-constrained
  optimization.
\newblock {\em GAMM-Mitt.}, 33(2):230--246, 2010.

\bibitem{BorziWinckel:2009}
A.~Borz{\`{\i}} and G.~von Winckel.
\newblock Multigrid methods and sparse-grid collocation techniques for
  parabolic optimal control problems with random coefficients.
\newblock {\em SIAM J. Sci. Comput.}, 31(3):2172--2192, 2009.

\bibitem{BorziWinckel:2011}
A.~Borz{\`{\i}} and G.~von Winckel.
\newblock A {POD} framework to determine robust controls in {PDE} optimization.
\newblock {\em Comput. Vis. Sci.}, 14(3):91--103, 2011.

\bibitem{charrier2013finite}
J.~Charrier, R.~Scheichl, and A.~L. Teckentrup.
\newblock Finite element error analysis of elliptic {PDE}s with random
  coefficients and its application to multilevel {M}onte {C}arlo methods.
\newblock {\em SIAM J. Numer. Anal.}, 51(1):322--352, 2013.

\bibitem{Charrier:2012}
Julia Charrier.
\newblock Strong and weak error estimates for elliptic partial differential
  equations with random coefficients.
\newblock {\em SIAM J. Numer. Anal.}, 50(1):216--246, 2012.

\bibitem{ChenQuarteroniRozza:2015}
P.~Chen, A.~Quarteroni, and G.~Rozza.
\newblock {Multilevel and weighted reduced basis methods for stochastic optimal
  control problems constrained by Stokes equation}.
\newblock {\em Numer. Math.}
\newblock Published online 05 July 2015.

\bibitem{ChenQuarteroni:2014}
Peng Chen and Alfio Quarteroni.
\newblock Weighted reduced basis method for stochastic optimal control problems
  with elliptic {PDE} constraint.
\newblock {\em SIAM/ASA J. Uncertain. Quantif.}, 2(1):364--396, 2014.

\bibitem{ChenQuarteroniRozza:2013}
Peng Chen, Alfio Quarteroni, and Gianluigi Rozza.
\newblock Stochastic optimal {R}obin boundary control problems of
  advection-dominated elliptic equations.
\newblock {\em SIAM J. Numer. Anal.}, 51(5):2700--2722, 2013.

\bibitem{CGST:2011}
K.~A. Cliffe, M.~B. Giles, R.~Scheichl, and A.~L. Teckentrup.
\newblock Multilevel {M}onte {C}arlo methods and applications to elliptic
  {PDE}s with random coefficients.
\newblock {\em Comput. Vis. Sci.}, 14(1):3--15, 2011.

\bibitem{da2014stochastic}
Giuseppe Da~Prato and Jerzy Zabczyk.
\newblock {\em Stochastic equations in infinite dimensions}, volume 152 of {\em
  Encyclopedia of Mathematics and its Applications}.
\newblock Cambridge University Press, Cambridge, second edition, 2014.

\bibitem{DKST:2015}
T.~J. Dodwell, C.~Ketelsen, R.~Scheichl, and A.~L. Teckentrup.
\newblock A hierarchical multilevel {M}arkov chain {M}onte {C}arlo algorithm
  with applications to uncertainty quantification in subsurface flow.
\newblock {\em SIAM/ASA J. Uncertain. Quantif.}, 3(1):1075--1108, 2015.

\bibitem{GarreisUlbrich:2016}
Sebastian Garreis and Michael Ulbrich.
\newblock {Constrained Optimization with Low-Rank Tensors and Applications to
  Parametric Problems with PDEs}.
\newblock Technical Report 5301, Optimization Online, January 2016.

\bibitem{ghanem2003stochastic}
Roger~G. Ghanem and Pol~D. Spanos.
\newblock {\em Stochastic finite elements: a spectral approach}.
\newblock Courier Corporation, 2003.

\bibitem{GNR:2015}
Michael~B. Giles, Tigran Nagapetyan, and Klaus Ritter.
\newblock Multilevel {M}onte {C}arlo approximation of distribution functions
  and densities.
\newblock {\em SIAM/ASA J. Uncertain. Quantif.}, 3(1):267--295, 2015.

\bibitem{GongXieYan:2015}
Wei Gong, Hehu Xie, and Ningning Yan.
\newblock A multilevel correction method for optimal controls of elliptic
  equations.
\newblock {\em SIAM J. Sci. Comput.}, 37(5):A2198--A2221, 2015.

\bibitem{GSU:2015}
Ivan~G. Graham, Robert Scheichl, and Elisabeth Ullmann.
\newblock {Mixed Finite Element Analysis of Lognormal Diffusion and Multilevel
  Monte Carlo Methods}.
\newblock {\em Stoch PDE: Anal Comp}.
\newblock published online 12 June 2015.

\bibitem{GunzburgerMing:2011}
Max Gunzburger and Ju~Ming.
\newblock Optimal control of stochastic flow over a backward-facing step using
  reduced-order modeling.
\newblock {\em SIAM J. Sci. Comput.}, 33(5):2641--2663, 2011.

\bibitem{Gunzburgeretal:2011}
Max~D. Gunzburger, Hyung-Chun Lee, and Jangwoon Lee.
\newblock Error estimates of stochastic optimal {N}eumann boundary control
  problems.
\newblock {\em SIAM J. Numer. Anal.}, 49(4):1532--1552, 2011.

\bibitem{GunzburgerActa}
Max~D. Gunzburger, Clayton~G. Webster, and Guannan Zhang.
\newblock Stochastic finite element methods for partial differential equations
  with random input data.
\newblock {\em Acta Numer.}, 23:521--650, 2014.

\bibitem{hackbusch1992elliptic}
W.~Hackbusch.
\newblock {\em Elliptic differential equations: theory and numerical
  treatment}.
\newblock Number~18. Springer Science+Business Media, 1992.

\bibitem{hinze2005variational}
M.~Hinze.
\newblock A variational discretization concept in control constrained
  optimization: the linear-quadratic case.
\newblock {\em Comput. Optim. Appl.}, 30(1):45--61, 2005.

\bibitem{pinnau2008optimization}
M.~Hinze, R.~Pinnau, M.~Ulbrich, and S.~Ulbrich.
\newblock {\em Optimization with {PDE} constraints}, volume~23 of {\em
  Mathematical Modelling: Theory and Applications}.
\newblock Springer, New York, 2009.

\bibitem{hinze2012globalized}
Michael Hinze and Morten Vierling.
\newblock {A globalized semi-smooth Newton method for variational
  discretization of control constrained elliptic optimal control problems}.
\newblock In {\em Constrained Optimization and Optimal Control for Partial
  Differential Equations}, volume 160 of {\em International Series of Numerical
  Mathematics}, pages 171--182. Springer, Basel, 2012.

\bibitem{HouLeeManouzi:2011}
L.~S. Hou, J.~Lee, and H.~Manouzi.
\newblock Finite element approximations of stochastic optimal control problems
  constrained by stochastic elliptic {PDE}s.
\newblock {\em J. Math. Anal. Appl.}, 384(1):87--103, 2011.

\bibitem{KSW:2014}
Ralf Kornhuber, Christoph Schwab, and Maren-Wanda Wolf.
\newblock Multilevel {M}onte {C}arlo finite element methods for stochastic
  elliptic variational inequalities.
\newblock {\em SIAM J. Numer. Anal.}, 52(3):1243--1268, 2014.

\bibitem{Kouri:2014}
D.~P. Kouri.
\newblock A multilevel stochastic collocation algorithm for optimization of
  {PDE}s with uncertain coefficients.
\newblock {\em SIAM/ASA J. Uncertain. Quantif.}, 2(1):55--81, 2014.

\bibitem{Kourietal:2013}
D.~P. Kouri, M.~Heinkenschloss, D.~Ridzal, and B.~G. van Bloemen~Waanders.
\newblock A trust-region algorithm with adaptive stochastic collocation for
  {PDE} optimization under uncertainty.
\newblock {\em SIAM J. Sci. Comput.}, 35(4):A1847--A1879, 2013.

\bibitem{Kourietal:2014}
D.~P. Kouri, M.~Heinkenschloss, D.~Ridzal, and B.~G. van Bloemen~Waanders.
\newblock Inexact objective function evaluations in a trust-region algorithm
  for {PDE}-constrained optimization under uncertainty.
\newblock {\em SIAM J. Sci. Comput.}, 36(6):A3011--A3029, 2014.

\bibitem{KunothSchwab:2013}
Angela Kunoth and Christoph Schwab.
\newblock Analytic regularity and {GPC} approximation for control problems
  constrained by linear parametric elliptic and parabolic {PDE}s.
\newblock {\em SIAM J. Control Optim.}, 51(3):2442--2471, 2013.

\bibitem{LazarZuaZua:2014}
Martin Lazar and Enrique Zuazua.
\newblock Averaged control and observation of parameter-depending wave
  equations.
\newblock {\em C. R. Math. Acad. Sci. Paris}, 352(6):497--502, 2014.

\bibitem{LeeLee:2013}
Hyung-Chun Lee and Jangwoon Lee.
\newblock A stochastic {G}alerkin method for stochastic control problems.
\newblock {\em Commun. Comput. Phys.}, 14(1):77--106, 2013.

\bibitem{LPSBook}
Gabriel~J. Lord, Catherine~E. Powell, and Tony Shardlow.
\newblock {\em An introduction to computational stochastic {PDE}s}.
\newblock Cambridge Texts in Applied Mathematics. Cambridge University Press,
  New York, 2014.

\bibitem{Negrietal:2015}
Federico Negri, Andrea Manzoni, and Gianluigi Rozza.
\newblock Reduced basis approximation of parametrized optimal flow control
  problems for the {S}tokes equations.
\newblock {\em Comput. Math. Appl.}, 69(4):319--336, 2015.

\bibitem{Negrietal:2013}
Federico Negri, Gianluigi Rozza, Andrea Manzoni, and Alfio Quarteroni.
\newblock Reduced basis method for parametrized elliptic optimal control
  problems.
\newblock {\em SIAM J. Sci. Comput.}, 35(5):A2316--A2340, 2013.

\bibitem{Roesch:2006}
Arnd R\"{o}sch.
\newblock Error estimates for linear-quadratic control problems with control
  constraints.
\newblock {\em Optim. Method. Softw.}, 21(1):121--134, 2006.

\bibitem{RosseelWells:2012}
Eveline Rosseel and Garth~N. Wells.
\newblock Optimal control with stochastic {PDE} constraints and uncertain
  controls.
\newblock {\em Comput. Methods Appl. Mech. Engrg.}, 213/216:152--167, 2012.

\bibitem{teckentrup2013further}
A.~L. Teckentrup, R.~Scheichl, M.~B. Giles, and E.~Ullmann.
\newblock Further analysis of multilevel {M}onte {C}arlo methods for elliptic
  {PDE}s with random coefficients.
\newblock {\em Numer. Math.}, 125(3):569--600, 2013.

\bibitem{Tiesleretal:2012}
Hanne Tiesler, Robert~M. Kirby, Dongbin Xiu, and Tobias Preusser.
\newblock Stochastic collocation for optimal control problems with stochastic
  {PDE} constraints.
\newblock {\em SIAM J. Control Optim.}, 50(5):2659--2682, 2012.

\bibitem{ZuaZua:2014}
Enrique Zuazua.
\newblock Averaged control.
\newblock {\em Automatica J. IFAC}, 50(12):3077--3087, 2014.

\end{thebibliography}

\end{document}